\newtheorem{theorem}{Theorem}[section]
\newtheorem{corollary}[theorem]{Corollary}
\newtheorem{lemma}[theorem]{Lemma}
\newtheorem{proposition}[theorem]{Proposition}
\theoremstyle{definition}
\newtheorem{example}[theorem]{Example}
\theoremstyle{remark}
\newcommand{\C}{\mathbb C}
\def \bC{\mathbb C}
\newcommand{\im}{\mathrm{Im}}
\newcommand{\re}{\mathrm{Re}}
\DeclareMathOperator{\jac}{Jac}
\DeclareMathOperator{\rk}{rk}
\begin{document}

\title[Transversality of holomorphic mappings]{Transversality of holomorphic mappings between real hypersurfaces in complex spaces of different dimensions}
\author{Peter Ebenfelt}
\address{Department of Mathematics, University of California at San Diego, La Jolla, CA 92093-0112}
\email{pebenfel@math.ucsd.edu,}
\author{Duong Ngoc Son}
\address{Department of Mathematics, University of California at San Diego, La Jolla, CA 92093-0112}
\email{snduong@math.ucsd.edu}
\thanks{The first author was partly supporting by the NSF grant DMS-1001322. The second author is partially supported by a scholarship from the Vietnam Education Foundation.}
\begin{abstract} We consider holomorphic mappings $H$ between a smooth real hypersurface $M\subset \bC^{n+1}$ and another $M'\subset \bC^{N+1}$ with $N\geq n$. We provide conditions guaranteeing that $H$ is transversal to $M'$ along all of $M$. In the strictly pseudoconvex case, this is well known and follows from the classical Hopf boundary lemma. In the equidimensional case ($N=n$), transversality holds for maps of full generic rank provided that the source is of finite type in view of recent results by the authors (see also a previous paper by the first author and L. Rothschild). In the positive codimensional case ($N>n$), the situation is more delicate as examples readily show. In recent work by S. Baouendi,  the first author, and L. Rothschild, conditions were given guaranteeing that the map $H$ is transversal outside a proper subvariety of $M$, and examples were given showing that transversality may fail at certain points.

One of the results in this paper implies that if $N\le 2n-2$, $M'$ is Levi-nondegenerate, and $H$ has maximal rank outside a complex subvariety of codimension $2$, then $H$ is transversal to $M'$ at all points of $M$. We show by examples that this conclusion fails in general if $N\geq 2n$, or if the set $W_H$ of points where $H$ is not of maximal rank has codimension one. We also show that $H$ is transversal at all points if $H$ is assumed to be a finite map (which allows $W_H$ to have codimension one) and the stronger inequality $N\leq 2n-3$ holds, provided that $M$ is of finite type.
\end{abstract}
\maketitle

\section{Introduction}

The geometric property of transversality is often a crucial ingredient in proving results of an analytic nature in CR geometry and other areas of analysis. We mention here as a general example the frequent use of the Hopf boundary point lemma (a transversality result) in elliptic PDE and potential theory. The reader is referred to e.g.\ \cite{ER06} and \cite{BER07} for a more detailed account of the significance of transversality in CR geometry. In this paper, we shall prove transversality results concerning mappings in CR geometry.

Let $M \subset \C^{n+d}$ and $M' \subset \C^{N+D}$ be smooth generic (in particular CR) submanifolds of codimension $d$ and $D$, respectively (so that the CR dimensions are $n$ and $N$, respectively), and $H$ a holomorphic mapping from an open neighborhood $U$ of $M$ in $\bC^{n+d}$ into $\C^{N+D}$ such that $H(M) \subset M'$. The study of how the CR geometries of $M$ and $M'$ influence the geometric behaviour of $H$, in particular its CR transversality to $M$, has received considerable attention from many authors over the years (see e.g.\ \cite{P77}, \cite{Forn78}, \cite{BRgeom}, \cite{BRhopf}, \cite{BHR95}, \cite{CR94}, \cite{CR98}, \cite{ER06}, \cite{HZ09}, \cite{LM06}, \cite{Zha}, \cite{BH}, \cite{ES10} and the references therein). The equidimensional case ($N=n$ and $D=d$) is by now well understood (in the finite type case) \cite{ES10}: {\it Assume that $H$ has full generic rank (i.e.\ $\jac H:=\det H_Z\not\equiv 0$), $p\in M$, and $M$ is of finite type at $p$. Then, $H$ is CR transversal to $M'$ at $p$.} (This result under the stronger assumption that $H$ is assumed to be a finite map at $p$ was proved earlier in \cite{ER06}.) If $M$ is assumed to be holomorphically nondegenerate, then the condition $\jac H\not\equiv 0$ is also necessary in this context. In this paper, we shall consider the more delicate situation where the CR dimension $N$ of the target $M'$ is larger than that, $n$, of the source $M$. We shall restrict to the case where $M$ and $M'$ are hypersurfaces ($d=D=1$). In this case, the notion of CR transversality coincides with the (in general) weaker notion of transversality in the traditional sense (see \cite{ER06}); recall that a holomorphic mapping $H$ from an open set $U\subset \bC^{n+1}\to \bC^{N+1}$ is said to be transversal (see, e.g., \cite{GG86}) to a real hypersurface $M'\subset \bC^{N+1}$ at $p\in U$ if $p':=H(p)\in M'$ and
\begin{equation}\label{transdef}
T_{H(p)} M' + dH(T_p\, \C^{n+1}) = T_{H(p)}\C^{N+1}.
\end{equation}
In \cite{BER07}, the case of real-analytic hypersurfaces $M\subset\bC^{n+1}$ and $M'\subset \bC^{N+1}$ was considered under the (obviously necessary condition) that the map $H\colon U\subset \bC^{n+1}\to \bC^{N+1}$ sending $M$ into $M'$ does not collapse all of $U$ into $M'$. Sufficient conditions involving the signature and rank of the Levi form $\mathcal L'$ of $M'$, and the CR dimensions $n$ and $N$ were given guaranteeing that the map $H$ must be transversal {\it outside a proper, real-analytic subvariety of} $M$, but not necessarily at a specific point $p\in M$. Examples and results were also given showing that these results are essentially sharp in the sense that if the conditions are violated, then the map $H$ could be non-transversal over all of $M$, but also that under the conditions given transversality can fail at certain points. In this paper, we shall give more restrictive conditions on the rank $r$ of $\mathcal L'$, the CR dimensions $n$ and $N$, and on the map $H$ that will guarantee transversality at {\it all} points.

To formulate our main results, we shall need to introduce a little more notation. Given a holomorphic map $H\colon U\subset \bC^{n+1}\to\bC^{N+1}$, we shall consider the complex analytic subvariety
\begin{equation}\label{WH}
W_H:=\{Z\in U\colon \rk H_Z(Z)<n+1\},
\end{equation}
where $H_Z$ denotes the $(N+1)\times (n+1)$ matrix of partial derivatives of the components of $H$,
$$
H_Z:=\left (\frac{\partial H_i}{\partial Z_j}\right ),\quad 1\leq i\leq N+1,\ 1\leq j\leq n+1.
$$
We shall only consider situations where $W_H$ is a proper subvariety (i.e.\ the rank of $H$ is of generic maximal rank); just as in the equidimensional case mentioned above, this is essentially necessary for transversality to hold under some mild conditions on $M$. Observe that if $\delta_l(Z)$, for $l=1,\ldots,m\leq\binom{N+1}{N-n}$, denote the collection of all non-trivial $(n+1)\times (n+1)$-minors of the matrix $H_Z(Z)$, then $W_H$ coincides with the set defined by
$$\delta_1(Z)=\ldots=\delta_m(Z)=0.$$
Thus, when $N>n$ the codimension of this set is in general large, and the codimension is one only when all the minors have a common divisor.  Our first result is the following:

\begin{theorem}\label{main} Let $M\subset \C^{n+1}$ and $M'\subset \C^{N+1}$ be smooth real hypersurfaces
through $p$ and $p'$ respectively, and $H: (\C^{n+1},p) \to  (\C^{N+1},p')$ a germ at $p$ of holomorphic
mapping such that $H(M)\subset M'$. Denote by $r$ the rank of the Levi form of $M'$ at $p'$ and assume that
\begin{equation}\label{codim}
 2N-r \le 2n- 2.
\end{equation}
If the germ at $p$ of the analytic variety $W_H$, given by \eqref{WH}, has codimension at least $2$, then $H$ is transversal to $M'$ at $p$.
\end{theorem}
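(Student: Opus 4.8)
The plan is to argue by contradiction: assuming $H$ is not transversal to $M'$ at $p$, I will produce a complex hypersurface through $p$ on which $H_Z$ drops rank, so that $W_H$ has codimension one, contradicting the hypothesis. First I would set up normal coordinates $Z=(z,w)\in\bC^n\times\bC$ and $Z'=(z',w')\in\bC^N\times\bC$ vanishing at $p$ and $p'$, so that $M=\{\im w=\varphi(z,\bar z,\re w)\}$ and $M'$ has defining function $\rho'=\im w'-\sum_{j=1}^r\epsilon_j|z'_j|^2-O(3)$, the Levi form at $p'$ being the rank-$r$ diagonal form $\mathrm{diag}(\epsilon_j)$. Writing $H=(f,g)=(f_1,\dots,f_N,g)$ and $\rho'\circ H=a\,\rho$, a first differentiation evaluated at $p$ gives $g_{z_k}(p)=0$ and $g_w(p)=a(p)$, so transversality at $p$ is equivalent to $a(p)\neq0$; hence the contradiction hypothesis reads $a(p)=0$, whence $dg(p)=0$ and the last row of $H_Z(p)$ vanishes.

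The linear-algebra core is to control the rank of $H_Z$ at such a point. Differentiating $\rho'\circ H=a\rho$ once in $Z_j$ and once in $\bar Z_k$ and restricting to $M$ yields the pulled-back Levi identity
\begin{equation*}
H_Z^{\,t}\,\mathcal L'\,\overline{H_Z}=a_{Z}\,\rho_{\bar Z}^{\,t}+\rho_{Z}\,a_{\bar Z}^{\,t}+a\,\rho_{Z\bar Z},
\end{equation*}
where $\mathcal L'$ is the complex Hessian of $\rho'$ along $H$. At a non-transversal point the right-hand side has rank at most two, and since the last row of $H_Z$ vanishes the image $V$ of $H_Z(p)$ lies in the hyperplane $\bC^N\times\{0\}$, on which $\mathcal L'$ still has rank $r$. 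The restriction inequality $\rk(\mathcal L'|_V)\ge r-2\,\mathrm{codim}_{\bC^N}V$, combined with $\rk(\mathcal L'|_V)\le2$, gives $\dim V\le N+1-r/2$, and the hypothesis $2N-r\le2n-2$ then forces $\dim V=\rk H_Z(p)\le n$. Thus $p\in W_H$, and the same computation applies at every non-transversal point near $p$.

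The main obstacle is to upgrade this pointwise rank drop to a full hypersurface inside $W_H$. Here I would pass to the complexification: writing $\tilde a(Z,\zeta)\,\rho(Z,\zeta)=\rho'(H(Z),\bar H(\zeta))$ and taking the Segre variety $Q_p=\{Z:\rho(Z,\bar p)=0\}$, one has $H(Q_p)\subset Q'_{p'}=\{w'=0\}$, so $g\equiv0$ on $Q_p$, and the first-order identity restricted to $\{\zeta=\bar p\}$ gives $g_Z(Z)=2i\,\tilde a(Z,\bar p)\,\rho_Z(Z,\bar p)$ on $Q_p$. The crux is to show that the non-transversality $\tilde a(p,\bar p)=0$ propagates to $\tilde a(\cdot,\bar p)\equiv0$ along all of $Q_p$. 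Granting this, $g$ vanishes to second order on $Q_p$, the last row of $H_Z$ vanishes identically there, and the complexified Levi identity has right-hand side of rank at most one—because the surviving terms are both proportional to the conormal $\rho_Z(\cdot,\bar p)$ in the $Z$-index, as $\tilde a_Z(\cdot,\bar p)$ itself is a multiple of $\rho_Z(\cdot,\bar p)$—so the argument of the previous paragraph now delivers $\rk H_Z\le n$ at every point of $Q_p$. Hence $Q_p\subset W_H$ is a complex hypersurface, all maximal minors of $H_Z$ share the common factor $\rho(Z,\bar p)$, and $W_H$ has codimension one, the desired contradiction.

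I expect the propagation of $\tilde a(\cdot,\bar p)\equiv0$ along $Q_p$ to be the technical heart of the proof, since the pointwise rank estimate alone is consistent with $W_H$ having codimension two (the non-transversal locus on $Q_p$ could a priori be only the hypersurface $\{\tilde a(\cdot,\bar p)=0\}$ within $Q_p$). Establishing it requires exploiting the complexified reflection identity together with the rank bound to control the zero locus of $\tilde a$ as a union of Segre varieties; it is precisely here that the strength of the Levi-form hypothesis $2N-r\le2n-2$, rather than the weaker inequalities allowing transversality to fail off a subvariety, must be used.
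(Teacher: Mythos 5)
Your opening step is correct, and it mirrors the rank counting that appears inside the paper's Lemma~\ref{keylemma}: at a non-transversal point of $M$ the right-hand side of the once-barred, once-unbarred differentiated identity has rank at most $2$, the last row of $H_Z$ vanishes, and \eqref{codim} then forces $\rk H_Z(p)\le n$, so every non-transversal point lies in $W_H$. But from there the proposal has two gaps. The first is the one you flag yourself: the propagation statement ``$a(p,\bar p)=0\Rightarrow a(\cdot,\bar p)\equiv 0$ on $Q_p$'' is not proved, and it is not a technical detail --- it is essentially the whole content of the paper's argument. The paper obtains it (in the formal category, which is what the smooth hypothesis requires) by prime-decomposing $\sqrt{I(a,\rho)}=\cap_j I(a_j,\rho)$ (Lemma~\ref{primedecomposition}), proving via vector fields Zariski-tangent to $I_j$ and rank estimates over the quotient field of $K[[\xi]]/I_j$ that each $I_j$ contains either all minors $\delta_l(Z)$ or all conjugates $\bar\delta_l(\xi)$ (Lemma~\ref{keylemma}), and assembling this into the factorization $a=B(Z)\bar C(\xi)\,t(Z,\xi)+\tilde a\rho$ with every irreducible divisor of $B$ and $C$ dividing every $\delta_l$ (Proposition~\ref{ridiculous}). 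Nothing in your sketch substitutes for this machinery.

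The second gap is fatal to the strategy itself: even granting propagation, your conclusion $Q_p\subset W_H$ is false, and the paper's own Example~\ref{ex1} shows it. Take $H(Z)=(Z_1^2,Z_1Z_2,\ldots,Z_1Z_{n+1},Z_1,0)$ with $n\ge 4$; then $2N-r=n+2\le 2n-2$, so \eqref{codim} holds, and at any $p\in M\cap\{Z_1=0\}$ one has $a(Z,\xi)=-Z_1\xi_1$, so $a(\cdot,\bar p)\equiv 0$ (propagation holds), the last row of $H_Z$ vanishes identically, and yet $\rk H_Z=n+1$ at generic points of $Q_p$, i.e.\ $Q_p\not\subset W_H$. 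The error is in the sentence ``the argument of the previous paragraph now delivers $\rk H_Z\le n$ at every point of $Q_p$'': along $Q_p\times\{\bar p\}$ the differentiated identity pairs $H_Z(Z)$ against the \emph{fixed} matrix $\bar H_\xi(\bar p)$, whose rank is only $\le n$ (rank $1$ in the example), so no restriction-of-Hermitian-form estimate applies; your pointwise argument worked only because at a real point both factors of the pairing are $H_Z(p)$ and its conjugate. The correct consequence of non-transversality at $p$ is not that the Segre variety lies in $W_H$, but that all maximal minors $\delta_l$ share a common irreducible factor vanishing at $p$, i.e.\ $W_H$ has a codimension-one component through $p$ --- which is exactly what Proposition~\ref{ridiculous} yields and what contradicts the codimension-two hypothesis in Theorem~\ref{funny}. (Segre varieties do appear in the paper, but in Corollary~\ref{Xlocus}, as fibers of the non-transversality locus over the codimension-one components of $W_H$, not as subsets of $W_H$.)
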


The following example shows that condition \eqref{codim} in Theorem \ref{main} is at least `almost'' sharp.
\begin{example}\label{ex0.5} Consider the strictly pseudoconvex hyperquadric $M\subset\bC^{n+1}$ (biholomorphically equivalent to the sphere) given by
$$
\im\, w-\sum_{j=1}^{n} |z_j|^2=0
$$
and the nondegenerate hyperquadric $M'\subset \bC^{2n+1}$ given by
$$
\im\, w'+\sum_{j=1}^{n}|z'_{2j-1}|^2 -\sum_{j=1}^n |z'_{2j}|^2=0,
$$
where we use coordinates $(z,w)\in \bC^n\times \bC$ and $(z',w,)\in \bC^{2n}\times \bC$.
Now, consider the polynomial mapping $H=(F_1,F_2,\ldots,F_{2n-1},F_{2n},G)\colon (\bC^{n+1},0)\to (\bC^{2n+1},0)$ given by
\begin{multline}
H(z,w):=\bigg(z_1+[z]\,z_1+\frac{i}{2}w,z_1-[z]\,z_1-\frac{i}{2}w,
\ldots,\\ z_n+[z]\,z_n+\frac{i}{2}w,z_n-[z]\,z_n-\frac{i}{2}w, -2[z]w \bigg),
\end{multline}
where we have used the notation $[z]:=\sum_{j=1}^nz_j$. We claim that $H$ sends $M$ into $M'$, $H$ is a local embedding at $0$ (and hence as germs at $0$, we have $W_H=\emptyset$), but $H$ is not transversal to $M'$ along the intersection of $M$ and the real hypersurface $\re\, [z]=0$, and hence, in particular, is not transversal at $0$ . For the reader's convenience, a proof of this claim is given in Section \ref{example}. In this example, $N=2n$ and $r=N=2n$ (since $M'$ is Levi nondegenerate). Thus, we have  $2N-r=N=2n$, which is equal to $(2n-2)+2$ and hence condition \eqref{codim} is violated. However, the authors do not know of an example where $2N-r=(2n-2)+1=2n-1$, which leaves open the possibility that condition \eqref{codim} could be sharpened to $2N-r\leq 2n-1$ in Theorem \ref{main}.
\end{example}

We would like to point out that when the target $M'$ is Levi nondegenerate at $p'$ (i.e.\ $r=N$, as in Example \ref{ex0.5} above), then the condition \eqref{codim} can be rewritten $N-n\leq n-2$. (The number $N-n$, the difference between dimension of the target space and the source space, is often referred to as the {\it codimension} of the map.) That is, transversality holds at $p$ for maps $H$ up to a codimensional gap $N-n$ that increases with the CR dimension $n$ of the source manifold, {\it provided} that the codimension of $W_H$ is at least 2. The following example shows that this phenomenon fails if we allow $W_H$ to have codimension one.

\begin{example} \label{ex1}  Consider the sphere $M\subset\bC^{n+1}$ given by
$$
\sum_{j=1}^{n+1} |Z_j|^2-1=0
$$
and the nondegenerate hyperquadric $M'\subset \bC^{n+3}$ given by
$$
\im\, w'-\left(\sum_{j=1}^{n+1}|z'_j|^2 - |z'_{n+2}|^2\right)=0.
$$
It is straightforward to verify that the polynomial mapping $H\colon \bC^{n+1}\to\bC^{n+3}$ given by
$$
H(Z):=(Z_1^2,Z_1Z_2,\ldots Z_1Z_{n+1}, Z_1,0)
$$
sends $M$ into $M'$. The set $W_H$ is given by $Z_1=0$ (and hence has codimension one), and the mapping $H$ is not transversal to $M'$ along the intersection of the sphere $M$ with $W_H$ (cf. Example 2.3 in \cite{BER07}). Thus, this is a family of examples where $W_H$ has codimension one, the map has codimension 2 (i.e.\ $N-n=2$), and transversality fails at certain points regardless of the CR dimension $n$ of the source.
\end{example}

Example \ref{ex1} shows that even for Levi nondegenerate hypersurfaces and maps of generic full rank, transversality may fail at specific points unless further conditions are imposed. One direction is to assume conditions relating the signatures of the Levi forms as in \cite{BH} in which transversality is proved for maps between hyperquadrics of the same signature. We shall not pursue this direction in the present paper.

We note that in Example \ref {ex1} the map $H$ sends the whole hyperplane $W_H=\{Z\colon Z_1=0\}$ to $0\in M'$. In particular, $H$ is not a finite map at $0\in \bC^{n+1}$. Recall that a (germ at $p$ of a) map $H\colon (\bC^{n-1},p)\to (\bC^{N+1},p')$ is finite if $H^{-1}(p')=\{p\}$ as germs at $p$, or equivalently if the vector space $\bC[[Z]]/I(H_1,\ldots,H_{N+1})$ is finite dimensional over $\bC$; here, $\bC[[Z]]$ denotes the ring of formal power series in $Z$ and $I(H_1,\ldots,H_{N+1})$ denotes the ideal generated by the components of $H$. Our next results asserts that if we sharpen condition \eqref{codim} slightly and require $M$ to be of finite type, then transversality holds at all points for finite maps.

\begin{theorem}\label{finitemap} Let $M\subset \C^{n+1}$ and $M'\subset \C^{N+1}$ be smooth real hypersurfaces
through $p$ and $p'$ respectively, and $H: (\C^{n+1},p) \to  (\C^{N+1},p')$ a germ at $p$ of holomorphic
mapping such that $H(M)\subset M'$. Denote by $r$ the rank of the Levi form of $M'$ at $p'$ and assume that
\begin{equation}\label{codim2}
 2N-r \le 2n- 3.
\end{equation}
Assume also that $M$ is of finite type at $p$ and $H$ is a finite map at $p$. Then, $H$ is transversal to $M'$ at $p$.
\end{theorem}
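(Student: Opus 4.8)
The plan is to argue by contradiction and to lean on Theorem \ref{main} as far as possible. Suppose $H$ is \emph{not} transversal to $M'$ at $p$. If the germ of $W_H$ at $p$ had codimension at least $2$, then, since \eqref{codim2} implies \eqref{codim}, Theorem \ref{main} would already force transversality at $p$; hence I may assume $W_H$ has codimension exactly one, say $W_H=\{\delta=0\}$ with $\delta$ the greatest common divisor of the $(n+1)\times(n+1)$ minors of $H_Z$. I would then choose normal coordinates $(z,w)$ and $(z',w')$ centered at $p$ and $p'$ in which $M=\{\im w=\psi\}$, $M'=\{\im w'=\phi\}$ with $\psi,\phi=O(2)$, and the Levi form of $M'$ at $p'$ is the rank-$r$ Hermitian part of $\phi$. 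Writing $H=(F,G)$ with $G$ the normal ($w'$) component and $\rho'\circ H=a\,\rho$ for a smooth real function $a$, a first-order comparison shows that $H$ is transversal at $p$ if and only if $a(p)\ne 0$, which is equivalent to the linear part of $G$ being a nonzero multiple of $w$; thus the hypothesis of non-transversality reads $dG(p)=0$.

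Next I would extract a rank bound from the order-two jet of $\rho'\circ H=a\,\rho$. With $a(p)=0$, matching the quadratic terms and restricting to the CR directions $\{w=0\}$ forces the pulled-back Levi form $\sum_{a,b}\mu_{ab}(F_a)^{(1)}\overline{(F_b)^{(1)}}$ to vanish identically in $z$, i.e.\ the image $V:=dH(T^{1,0}_p M)\subset T^{1,0}_{p'}M'$ is totally isotropic for the Levi form of $M'$. Since a Hermitian form of rank $r$ has no isotropic subspace of dimension exceeding $N-\lceil r/2\rceil$, we get $\dim V\le N-\lceil r/2\rceil$, and combined with $dG(p)=0$ this yields $\rk H_Z(p)\le N-\lceil r/2\rceil+1$. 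Reading \eqref{codim2} as $r\ge 2(N-n)+3$, so that $\lceil r/2\rceil\ge N-n+2$, I obtain $\rk H_Z(p)\le n-1$, i.e.\ $H_Z$ has corank at least $2$ at $p$. By lower semicontinuity of the Levi rank, the same estimate holds at every non-transversal point $q$ near $p$, so the non-transversal locus $\Sigma\subset M$ is contained in $W_H^{(2)}\cap M$, where $W_H^{(2)}:=\{\rk H_Z\le n-1\}$.

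The crux, and the step I expect to be the main obstacle, is to turn finiteness and finite type into a dimension contradiction. The key claim is a propagation statement: if $p\in\Sigma$ then, because $M$ is of finite type (equivalently minimal) at $p$, the set $\Sigma$ has real dimension at least $2n-1$, that is, it fills out a real hypersurface of $M$ through $p$ (precisely the behaviour seen in Example \ref{ex1}). Granting this, $\Sigma$ is a real $(2n-1)$-dimensional subset of the complex-analytic set $W_H^{(2)}$, which must then have complex dimension at least $n$; as $W_H^{(2)}\subset W_H=\{\delta=0\}$ has codimension one, this forces $W_H^{(2)}=W_H$, so $\rk H_Z\le n-1$ holds identically on $W_H$. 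On the other hand, finiteness of $H$ gives $(H|_{W_H})^{-1}(p')=\{p\}$, so $H|_{W_H}$ is a finite holomorphic germ on the $n$-dimensional set $W_H$ and therefore has $n$-dimensional image; but generic rank $\le n-1$ forces $\dim H(W_H)\le n-1$, a contradiction. (If instead $\Sigma=M$, the same identity-principle argument applied to the real hypersurface $M$ itself forces $\rk H_Z\le n-1$ on all of $\bC^{n+1}$, contradicting that $W_H$ is proper.)

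The delicate part is the propagation itself in the merely smooth, finite-type setting, where Segre varieties are unavailable: one cannot simply analytically continue. I would instead control the zero set of the transversality function $a$ along the CR orbit of $p$ using the finite-type machinery of Baouendi--Ebenfelt--Rothschild, expressing the CR-derivatives of $a|_M$ through the CR-derivatives of $G$ (which vanish to high order at $p$ by non-transversality) and iterated commutators of the CR vector fields, to show that $a$ cannot vanish on a set smaller than a hypersurface when $M$ is minimal and $H$ is finite. The dimension count of the third paragraph is then purely formal, so essentially all the work lies in making this propagation rigorous.
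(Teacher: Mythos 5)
Your opening reduction via Theorem \ref{main} and your pointwise estimate are both correct: if $a(q)=0$ at a point $q\in M$ near $p$, then $dH_q(T_q\C^{n+1})\subset\ker\partial\rho'$, and matching second-order terms in $\rho'\circ H=a\rho$ shows that $dH_q(T^{1,0}_qM)$ is isotropic for the Levi form of $M'$ at $H(q)$, whence $\rk H_Z(q)\le N-\lceil r/2\rceil+1\le n-1$ under \eqref{codim2}. This is a nice pointwise version of the rank count that the paper performs only generically (over quotient fields of formal power series rings, in Lemma \ref{keylemma}). Your closing dimension count is also essentially fine, after replacing ``$W_H^{(2)}=W_H$'' by ``$W_H^{(2)}$ contains an irreducible component of $W_H$,'' to which the finite-map argument applies equally well. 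The genuine gap is the propagation claim, and it is not merely unproven: as you state it (non-transversality at $p$, $M$ minimal, $H$ finite $\Rightarrow$ $\Sigma$ has real dimension at least $2n-1$), it is false. (Note also an internal slip: a set of real dimension $2n-1$ is not a ``real hypersurface of $M$,'' which would have dimension $2n$; in Example \ref{ex1} the locus has real codimension $2$ in $M$.)

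Here is a counterexample to the propagation claim. Let $M=\{\im w=|z|^2\}\subset\C^{n+1}$ and use the polarization identity $|w|^2\bigl(\im w-|z|^2\bigr)=\tfrac14|w^2+iw|^2-\tfrac14|w^2-iw|^2-\sum_{j}|wz_j|^2$. Then the map $H=\bigl(\tfrac12(w^2+iw),z_1^2,\dots,z_n^2,\;\tfrac12(w^2-iw),wz_1,\dots,wz_n,z_1^2,\dots,z_n^2,\;0\bigr)$ sends $M$ into the nondegenerate hyperquadric $M'\subset\C^{3n+3}$ given by $\im w'=\sum_{j=1}^{n+1}|z'_j|^2-\sum_{j=n+2}^{3n+2}|z'_j|^2$, with $\rho'\circ H=-|w|^2\rho$. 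Here $M$ is strictly pseudoconvex (hence of finite type), $H$ has generic full rank and is finite at $0$ (since $H^{-1}(0)\subset\{w^2\pm iw=0,\,z_j^2=0\}=\{0\}$), yet $a=-|w|^2$ vanishes on $M$ only at the origin: the non-transversality locus is a single point, of real dimension $0$. Of course \eqref{codim2} fails here ($2N-r=3n+2$), and that is exactly the point: no propagation statement can follow from minimality and finiteness of $H$ alone, so any true version of your key claim must use the inequality \eqref{codim2} in an essential way --- which is where the entire difficulty of the theorem sits, and which your sketched mechanism (CR derivatives of $a|_M$ and iterated commutators) never invokes. The paper handles precisely this step algebraically rather than by a dimension count on $M$: Theorem \ref{finitemap} is deduced from Theorem \ref{mains} with $s=n$ (finiteness of $H$ guarantees that $W_H$ is proper and $W_H^n$ has codimension at least $2$), and the proof of the latter (Theorem \ref{boring}) combines the prime decomposition of $I(a,\rho)$ (Lemma \ref{primedecomposition}), the dichotomy of Lemma \ref{keylemma}, and the finite-type Lemma \ref{stupid} to derive a rank contradiction over the quotient field of $\C[[Z,\xi]]/I_j$, with no intermediate statement about the size of the non-transversality locus.
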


We would like to point out that the map $H$ in Example \ref{ex0.5} is a finite map (indeed, it is a local embedding at $p=0$ and therefore locally 1-to-1 there), showing that condition \eqref{codim2} cannot be improved by much, and the rate of growth of the codimensional gap where transversality holds grows like $n$. We would also like to point out the well known fact that if $M$ is Levi nondegenerate at $p$, then transversality of $H$ at $p$ implies that $H$ is in fact a transversal local embedding at $p$.

Theorem \ref{finitemap} will be a direct consequence of a more general result, which we will now present. Let $s$ be an integer with $1\leq s\leq n+1$, and define
\begin{equation}\label{WHs}
W_H^s:=\{Z\in \bC^{n+1}\colon \rk H_Z(Z)<s\}.
\end{equation}
We note that $W_H^{n+1}=W_H$. Each $W_H^s$ is a complex analytic variety defined by the vanishing of all $k\times k$ minors of $H_Z$, for $k=s,\ldots, n+1$, and we have a nesting
$$
W_H^1\subset W_H^1\subset\ldots\subset W_H^{n+1}=W_H.
$$
Our next result is the following.

\begin{theorem}\label{mains} Let $M\subset \C^{n+1}$ and $M'\subset \C^{N+1}$ be smooth real hypersurfaces
through $p$ and $p'$ respectively, and $H: (\C^{n+1},p) \to  (\C^{N+1},p')$ a germ at $p$ of holomorphic
mapping such that $H(M)\subset M'$. Denote by $r$ the rank of the Levi form of $M'$ at $p'$. Assume that $M$ is of finite type at $p$ and that, for some $1\leq s\leq n+1$,
\begin{equation}\label{codims}
 2N-r \le n+s-3.
\end{equation}
If the germ at $p$ of the analytic variety $W_H$, given by \eqref{WH}, is proper (i.e.\ $H$ has generic rank $n+1$) and the germ at $p$ of $W_H^s$, given by \eqref{WHs}, has codimension at least $2$, then $H$ is transversal to $M'$ at $p$.
\end{theorem}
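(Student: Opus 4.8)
The plan is to reduce transversality at $p$ to the nonvanishing of a single real function, then to trap the non-transversal locus inside $W_H^s$ by a Hermitian linear-algebra estimate that is sharpened using finite type. First I would normalize so that $p=p'=0$ and $M,M'$ are graphs with defining functions $\rho,\rho'$, and write $H=(F,G)$. Since $H(M)\subset M'$, there is a unique real function $a$ with
\[
\rho'\bigl(H(Z),\overline{H(Z)}\bigr)=a(Z,\bar Z)\,\rho(Z,\bar Z)
\]
on a full neighborhood of $0$. Differentiating this identity once and restricting to $0\in M$ shows that $\partial(\rho'\circ H)|_0=a(0)\,\partial\rho|_0$, so that $H$ is transversal to $M'$ at $0$ if and only if $a(0)\neq0$; equivalently, the holomorphic function $\psi(Z):=\rho'\bigl(H(Z),\overline{H(p)}\bigr)$, which vanishes along the Segre variety $Q_p$, must have nonvanishing differential at $p$. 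It therefore suffices to prove $a(p)\neq0$.

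Next I would extract the second-order information. Applying $\partial\bar\partial$ to the basic identity, restricting to $M$ and to the complex tangent space, and using that $H$ is holomorphic (so the first-order cross terms drop and $\rho\,\partial\bar\partial a$ vanishes on $M$) yields the pointwise relation
\[
\mathcal{L}'_{H(Z)}\bigl(H_*X,\overline{H_*X}\bigr)=a(Z)\,\mathcal{L}_M(X,\bar X),\qquad X\in T^{1,0}_ZM,\ Z\in M,
\]
where $\mathcal{L},\mathcal{L}'$ denote the Levi forms. Hence at a non-transversal point $Z_0$ (where $a(Z_0)=0$) the image $V:=H_*\bigl(T^{1,0}_{Z_0}M\bigr)\subset T^{1,0}_{H(Z_0)}M'$ is \emph{totally isotropic} for $\mathcal{L}'$. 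Since $\mathcal{L}'$ has rank $\ge r$ there, a totally isotropic subspace has dimension at most $N-\lceil r/2\rceil$, while $\dim V\ge \rk H_Z(Z_0)-1$; combined with \eqref{codims} this forces $\rk H_Z(Z_0)\le\lfloor(n+s-1)/2\rfloor$. For the two extreme values $s=n$ and $s=n+1$ this already gives $\rk H_Z(Z_0)\le s-1$, i.e. the non-transversal locus $\Sigma$ is contained in $W_H^s$; the case $s=n+1$ is exactly the content of Theorem \ref{main} and needs no finite type.

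For general $s$ the isotropy bound is too weak, and this is where finite type must enter and where I expect the \textbf{main obstacle} to lie. The key claim is that minimality of $M$ upgrades ``totally isotropic'' to ``contained in the Levi kernel'': $V\subseteq\ker\mathcal{L}'_{p'}$. Since $\dim\ker\mathcal{L}'_{p'}=N-r$, this improves the estimate to $\rk H_Z(Z_0)\le N-r+1$, and because $n\le N$ the inequality \eqref{codims} gives $N-r=(2N-r)-N\le(n+s-3)-N\le s-3$, so that $\rk H_Z(Z_0)\le s-2$ and therefore $\Sigma\subset W_H^s$ for every $s$. To establish the claim I would iterate the Levi-form identity, differentiating $\rho'(H,\bar H)=a\rho$ repeatedly along the CR vector fields of $M$ and using $a(p)=0$ together with the higher commutators that span because $M$ is of finite type, thereby obstructing the directions in $V$ that pair nontrivially with the definite part of $\mathcal{L}'$ and leaving only the Levi-null directions. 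The bookkeeping of the higher fundamental forms of $M'$, and the verification that \eqref{codims} is precisely the inequality making this iteration close, is the technical heart; Example \ref{ex0.5}, where \eqref{codims} fails and $V\not\subset\ker\mathcal{L}'$, shows that this upgrade genuinely cannot be omitted.

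Finally, granting the inclusion $\Sigma\subset W_H^s$: because $W_H^s$ has complex codimension at least $2$, the set $M\setminus W_H^s$ is connected and dense in $M$, so $a$ is nonvanishing of constant sign there and we may normalize $a\ge0$ on $M$. It then remains to exclude $a(p)=0$, and here I would again use that $M$ is of finite type at $p$: a zero of the nonnegative function $a$ at $p$, fed back into the factorization $\psi=\sigma\,h$ (with $\sigma$ a defining function of the smooth hypersurface $Q_p$, so that non-transversality means $h(p)=0$ and $H^{-1}(Q'_{p'})$ acquires a second component through $p$), propagates along the Segre sets to force $a$ to vanish on a neighborhood of $p$ in $M$ — a Hopf/maximum-principle-type conclusion incompatible with $\{a=0\}\subseteq W_H^s$ being thin. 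This yields $a(p)>0$ and hence transversality. Theorem \ref{finitemap} then follows by applying the result with $s=n$ and invoking the fact that a finite map satisfies $\dim W_H^s\le s-1$, so that $W_H^n$ automatically has codimension at least $2$.
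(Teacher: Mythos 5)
Your reduction to showing $a(p)\neq 0$, and your observation that $V=dH\bigl(T^{1,0}_{Z_0}M\bigr)$ is totally isotropic for $\mathcal L'$ at any non-transversal point $Z_0\in M$, are both correct and consistent with the paper's setup. But the two steps that carry the real weight of the argument both fail. The fatal one is your ``key claim'' that finite type of $M$ upgrades isotropy to $V\subseteq\ker\mathcal L'_{p'}$: this is false, and Example \ref{ex1} of the paper itself is a counterexample under exactly the hypotheses you have in hand at that stage. There the source is the sphere (finite type everywhere), the target is Levi-nondegenerate (so $\ker\mathcal L'=\{0\}$), and with $n\geq 4$ and $s=5$ condition \eqref{codims} holds, since $2N-r=n+2\leq n+s-3$; yet at each non-transversal point $Z_0\in M\cap\{Z_1=0\}$ the image $V$ is a nonzero isotropic line, hence not contained in the kernel. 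The only hypothesis of Theorem \ref{mains} violated in that example is the codimension bound on $W_H^s$ --- but in your logical order that hypothesis is invoked only \emph{after} the key claim (to convert $\Sigma\subset W_H^s$ into a contradiction), so it cannot rescue the claim. No amount of differentiating $\rho'(H,\bar H)=a\rho$ along CR vector fields will prove a false statement; a correct proof must make the smallness of $W_H^s$ enter before or within this step, which is precisely what the paper does.

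The second gap is the concluding ``Hopf/propagation along Segre sets'' step: even granting $\Sigma\subset W_H^s\cap M$ and $a\geq 0$ on $M$, you assert without proof that $a(p)=0$ forces $a\equiv 0$ near $p$ on $M$. This is not a citable standard fact, and thinness of the zero set of a nonnegative function is not by itself contradictory (consider $|Z_1|^2+|Z_2|^2$); any contradiction must exploit the special structure of $a$. The same defect already undercuts your remark about Theorem \ref{main}: the inclusion $\Sigma\subset W_H$ does not prove transversality \emph{at} $p$, since $p$ may itself lie in $W_H$. The structure you are missing is exactly what the paper extracts by purely algebraic means in the complexification, where the non-transversality locus is Segre-saturated (Corollary \ref{Xlocus}) rather than being analyzed only along the diagonal $\xi=\bar Z$ where your argument lives: the prime decomposition $\sqrt{I(a,\rho)}=\cap_j I_j$ with $I_j=I(a_j,\rho)$ (Lemma \ref{primedecomposition}); the dichotomy that each $I_j$ contains either all minors $\delta_l(Z)$ or all conjugate minors $\bar\delta_l(\xi)$ (Lemma \ref{keylemma}); the fact that finite type forbids $I_j$ from containing both a nontrivial $\alpha(Z)$ and a nontrivial $\beta(\xi)$ (Lemma \ref{stupid}, where finite type enters as the algebraic condition $\bar Q(\chi,z,0)\not\equiv 0$, not through higher-order Levi forms or a maximum principle); and finally a rank count over the fraction field of $K[[\xi]]/I_j$, using that $\hat{\bar H}_\xi$ has rank $n+1$ and $\hat H_Z$ has rank at least $s$ there, which yields $2N-r\geq n+s-2$ and contradicts \eqref{codims}. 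Your proposal would need to be rebuilt around an argument of this kind; as written, its central claim is refuted by the paper's own example and its endgame is unsupported.
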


We note that if $H$ is a finite map at $p$, then $W_H$ is proper and $W_H^n$ has codimension at least 2. Thus, Theorem \ref{finitemap} follows from Theorem \ref{mains} with $s=n$.

We end this introduction by pointing out that if $H$ is a smooth CR mapping from $M$ to $M'$, then we can identify $H$ with a formal holomorphic power series mapping in the variable $Z\in \bC^{n+1}$ centered at $Z=p$ and sending $M$ into $M'$ (formally); see e.g.\ \cite{BER99a} and \cite{BER99b}. The definition of transversality \eqref{transdef} makes sense for formal mappings, and Theorems \ref{main}, \ref{finitemap}, and \ref{mains} remain true for such maps, provided that the algebraic definition of finite map is used in Theorem \ref{finitemap}, and the conditions on the analytic varieties $W_H^s$ in Theorems \ref{main} and \ref{mains} are interpreted algebraically as in Theorems \ref{funny} and \ref{boring}.

\section{Preliminaries}
In this section we will summarize some basic facts and definitions that will be used in this paper. We refer the reader to the book \cite{BER99a} for more details.

A smooth real hypersurface in $\C^{n+1} (\cong \mathbb R^{2n+2})$ is a subset $M$, locally defined by the vanishing of a local defining equation $\rho(Z,\bar Z) = 0$, where $\rho$ is a smooth real-valued function satisfying $d\rho \ne 0$ along $M$. Such $M$ is a CR manifold with CR bundle $T^{(0,1)}M$ whose fiber at $p\in$ is defined by $ T_p^{(0,1)} M : = \C T_pM \cap T^{0,1} \C^{n+1}$.  Sections of $T^{(0,1)}M$ are called CR vector fields. A real hypersurface $M$ is said to be of finite type at $p$ (in the sense of Kohn and Bloom-Graham) if the (complex) Lie algebra
$\mathfrak g_M$ generated by all CR vector fields and their conjugates near $p$ satisfies $\mathfrak g_M(p) = \C T_pM$. The complex conjugate bundle $\overline{T^{(0,1)}}M$ is denoted by $T^{(1,0)}M$. Associated to $M$ at $p$, there is a Hermitian form $\mathcal L_p\colon T_p^{(1,0)}M\times T_p^{(1,0)}M \to \bC T_pM/(T_p^{(1,0)}M + T_p^{(0,1)}M)\cong \bC$ called the Levi form of $M$ at $p$. In terms of a local defining equation $\rho=0$, the space $T^{(1,0)}_pM$ can be identified with the subspace of $c\in \bC^{n+1}$ such that
$$
\sum_{j=1}^{n+1}\frac{\partial\rho}{\partial Z_j}(p,\bar p)c_j=0,
$$
and then the Levi form $\mathcal L_p$ is represented by the restriction to this space of the Hermitian $(n+1)\times(n+1)$-matrix
$$
\rho_{Z\bar Z}(p,\bar p):=\left(\frac{\partial^2\rho}{\partial Z_i\partial \bar Z_j}(p,\bar p)\right),\quad 1\leq i,j\leq n+1.
$$
If $U$ is an open neighborhood of $M$ in $\bC^{n+1}$ and $H\colon U\colon \bC^{N+1}$  a holomorphic mapping, then $H$ sends $M$ into a smooth real hypersurface $M'\subset \bC^{N+1}$ if and only if there is a smooth function $a$ in $U\subset \bC^{n+1}$ such that
$\rho'\circ H=a\rho$, where $\rho'$ denotes a defining function for $M'$. Moreover, $H$ is transversal to $M'$ precisely at those points $p\in M$ where $a\neq 0$ (see e.g.\ \cite{ER06}). In what follows, we shall always assume, without loss of generality of course, that the given points $p\in M$ and $p'=H(p)\in M'$ are both the origin $p=0\in \bC^{n+1}$, and $p'=0\in \bC^{N+1}$.

When $M$ and $M'$ are real-analytic, then $\rho$ and $\rho'$ are given by convergent power series in $(Z,\bar Z)\in \bC^{n+1}\times\bC^{n+1}$ and $(Z',\bar Z')\in \bC^{N+1}\times\bC^{N+1}$, respectively. By replacing $Z\bar Z$ and $\bar Z'$ by independent variables $\xi$ and $\bar \xi$, we obtain a holomorphic mapping $\mathcal H:=(H,\bar H)\colon U\times U^*\to \bC^{N+1}\times \bC^{N+1}$, where
$$
\bar H(\xi):=\overline{H(\bar \xi)},\quad U^*:=\{\xi\in \bC^{n+1}\colon \bar \xi\in U\},
$$
sending $0$ to $0$ and $\mathcal M$ into $\mathcal M'$, where $\mathcal M:=\{\rho(Z,\xi)=0\}\subset U\times U^*$ and $\mathcal M'=\{\rho'(Z',\xi')=0\} \subset \bC^{N+1}\times \bC^{N+1}$ denote the complexifications of $M$ and $M'$, respectively. Thus, we have
\begin{equation}\label{Hdefeq}
\rho'(H(Z),\bar H(\xi))=a(Z,\xi)\rho(Z,\xi),
\end{equation}
and $\mathcal H$ fails to be transversal to $\mathcal M'$ precisely along the common zero set of $a(Z,\xi)$ and $\rho(Z,\xi)$. If $M$ and $M'$ are merely $C^\infty$-smooth, then we can replace $\rho$, $a$, and $\rho'$ by their formal Taylor series at $0$ in $(Z,\bar Z)$ and $(Z',\bar Z')$ and $H$ by its convergent (or formal if $H$ is a $C^\infty$-smooth CR mapping) Taylor series at $0$ and obtain \eqref{Hdefeq} as an identity of formal power series. This is standard procedure in the field, and is referred to as identifying $M$ and $M'$ with their formal manifolds and considering $H$ as a formal mapping sending $M$ into $M'$; the reader is referred to e.g.\ \cite{BRgeom}, \cite{BER99a} or \cite{BER99b} for further discussion of this procedure. In what follows, we shall work over the rings of formal power series with formal manifolds and mappings, unless explicitly specified otherwise. For convenience, we shall also drop the $'$ on the target space coordinates $(Z',\xi')$, as it will be clear from the context to which space the variables belong.

It is well known, see e.g.\ \cite{BER99a}, that there are formal holomorphic (or convergent holomorphic in the real-analytic situation) normal coordinates $Z=(z,w)\in \bC^n\times \bC$ at $0\in M$ such that $M$ can be defined by a complex formal (again, convergent in the real-analytic case) equation
\begin{equation}\label{e:2.1}
 \rho(Z,\bar Z): = w - Q(z,\bar z,\bar w) = 0,
\end{equation}
where $Q(z,\xi)=Q(z,\chi,\tau)$ is a holomorphic function defined in a neighborhood of $0$ in $\C^n\times \C^n \times \C$ satisfying
\begin{equation}
 Q(z,0,\tau) \equiv Q(0,\chi,\tau) \equiv 0;
\end{equation}
The fact that equation \eqref{e:2.1} defines a real hypersurface is equivalent to one of the following the identities
\begin{equation}\label{reality}
 Q(z,\chi,\bar Q(\chi, z, w)) \equiv w \quad \text{or} \quad \bar Q(\chi,z,Q(z,\chi,\tau)) \equiv \tau;
\end{equation}
recall that if $u(x)$ is a formal power series in $x=(x_1,\ldots, x_q)$ (or holomorphic function), then we use the notation $\bar u(y) = \overline{u(\bar y)}$.
It follows that, in normal coordinates using $Z=(z,w)$ and $\xi=(\chi,\tau)$, we may use either of the choices
\begin{equation}\label{rhonc1}
\rho(Z,\xi)=w-Q(z,\chi,\tau)
\end{equation}
 or, in view of \eqref{reality},
 \begin{equation}\label{rhonc2}
 \rho(Z,\xi)=\tau-\bar Q(\chi,z,w).
 \end{equation}

We conclude this section by mentioning that in normal coordinates, the $T^{(1,0)}_0M$ space can be identified with the space of $c\in \bC^n\times \bC$ such that $c_{n+1}=0$ and the Levi form of $M$ at $0$ can be represented by the $n\times n$ matrix $Q_{z\chi}(0,0,0)$. Moreover, $M$ is of finite type at $0$ if and only if $Q(\chi,z,0)\not\equiv 0$.

\section{The non-transversality locus and proof of Theorem~\ref{main}}
In this section, we will assume $M$ and $M'$ are (analytic, smooth, or formal) real hypersurfaces in $\C^{n+1}$ and $\C^{N+1}$, respectively, and as mentioned in the previous section we shall assume that $p=0\in M$ and $p'=0\in M'$. We shall identify $M$ and $M'$ with formal hypersurfaces as explained in the previous section. We shall also assume in this section that
\begin{equation}\label{basiccodim}
2N-r \le 2n-2,
\end{equation}
where $r$ is the rank of Levi form of $M'$ at $p'=0$. We shall use the notation $\rho(Z,\xi)$ and $\rho'(Z',\xi')$ for (complexified) formal defining functions for $M$ and $M'$, respectively. Let $H\colon (\bC^{n+1},0)\to(\bC^{N+1},0)$ be a formal holomorphic mapping sending $M$ into $M'$, i.e.\
\begin{equation}\label{e:1}
\rho'(H(Z),\bar H(\xi)) = a(Z,\xi)  \rho(Z,\xi)
\end{equation}
where $a(Z,\xi)$ is a formal power series in $\C[[Z,\xi]]$. We shall assume that $H$ has generic rank $n+1$, i.e., there is at least one $(n+1)\times (n+1)$-minors of $H_Z$ which does not vanish identically. We shall denote by $\{\delta_l(Z), l=1,2,\dots, m\}$ the collection of $(n+1)\times (n+1)$-minors which do not vanish identically. (Thus, we have $1\leq m\leq \binom{N+1}{N-n}$.)

We first observe that $a\neq 0$ when \eqref{basiccodim} holds. For the readers convenience, we sketch the simple proof. Assume that $a\equiv 0$. Then, by differentiating \eqref{e:1} once with respect to $Z$ and once with respect to $\xi$, we obtain
\begin{equation}\label{firstLeviid}
H_Z^t(Z)\,\rho'_{Z\xi}(H(Z),\bar H(\xi))\,\bar H_\xi(\xi)=0,
\end{equation}
where as before $H_Z$ is the $(N+1)\times(n+1)$ Jacobian matrix of $H$; superscript $t$ denotes transpose of a matrix, and $\rho'_{Z\xi}$ is an $(N+1)\times (N+1)$-matrix. If we let $S$ denote the field of fractions of $\bC[[Z,\xi]]$, then we can regard \eqref{firstLeviid} as a matrix identity over $S$. Note that the ranks of $H^t_Z$ and $\bar H_\xi$ over $K$ are both $n+1$, and the rank of $\rho'_{Z\xi}$ is at least $r$ (since the rank of $\rho'_{Z\xi}(0,0)$ is at least $r$). Elementary linear algebra implies that $n+1-(N+1-r)\geq (N+1)-(n+1)$ or, equivalently, $2N-t\geq 2n$, proving our claim that $a\not\equiv 0$ under condition \eqref{basiccodim}. It now follows from Theorem 1.1 in \cite{BER07} that $a$ is not a multiple of $\rho$. In other words, in normal coordinates $Z=(z,w)$ and $\xi=(\chi,\tau)$ as in the previous section, we have $$
a(Z,(\chi,\bar Q(\chi,Z)))\not \equiv 0,\quad a((z,Q(z,\chi,\tau)),\xi)\not\equiv 0
$$
where $Q$ and $\bar Q$ are as in \eqref{rhonc1} and \eqref{rhonc2}, respectively.

As mentioned in the first section, the map $H$ is transversal to $M'$ at $0$ if and only if $a(0,0) \ne 0$. We shall consider the ideal $I:=I(a,\rho)$ in the ring $\bC[[Z,\xi]]$ of formal power series in $(Z,\xi)$. We that note the power series $a$ depends on the choices of defining power series $\rho$ and $\rho'$, but the ideal $I$ clearly does not. In the case $a(0,0) = 0$, the ideal $I$ is proper. When $M$, $M'$ and $H$ are analytic, $I$ defines a complex analytic variety $\mathcal X$ in $\C^{n+1}_Z \times \C^{n+1}_\xi$ consisting of the points at which the complexified map $\mathcal H(Z,\xi) = (H(Z),\bar H(\xi))$ fails to be transversal to the complexified hypersurface $\mathcal M'$. In this section, we shall also give a description (Corollary \ref{Xlocus}) of the non-transversality locus $\mathcal X$ (in the analytic case) of a (complexified) holomorphic map of generic full rank when the condition on the codimension of $W_H$ in Theorem \ref{main} fails; when the codimensional condition on $W_H$ holds, we shall show that $\mathcal X$ is empty.  But first let us observe some simple properties of~$I$.
\begin{lemma}\label{her} The ideal $I$ and its radical $\sqrt{I}$ are Hermitian, i.e.\ if $\alpha(Z,\xi) \in \C[[Z,\xi]]$, then $\alpha(Z,\xi) \in I$ if and only if $\bar \alpha(\xi,Z) \in I$, and similarly for~$\sqrt{I}$.
\end{lemma}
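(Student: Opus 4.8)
The plan is to realize the Hermitian symmetry as invariance under a single ring involution and then track that involution through the basic identity \eqref{e:1}. Define the conjugate–swap map $\sigma\colon \C[[Z,\xi]]\to\C[[Z,\xi]]$ by $\sigma(\alpha)(Z,\xi):=\bar\alpha(\xi,Z)=\overline{\alpha(\bar\xi,\bar Z)}$. One checks directly that $\sigma$ is a conjugate-linear ring automorphism with $\sigma^2=\mathrm{id}$, and that the assertion of the lemma is exactly $\sigma(I)=I$ (and $\sigma(\sqrt I)=\sqrt I$). Since $\sigma$ is a ring automorphism, the radical statement will follow immediately once $\sigma(I)=I$ is established; concretely, $f^k\in I$ iff $\sigma(f)^k=\sigma(f^k)\in\sigma(I)=I$, so $\sigma(\sqrt I)=\sqrt I$. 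Thus it suffices to prove $\sigma(I)=I$.

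Next I would normalize the defining functions. Because the ideal $I=I(a,\rho)$ does not depend on the choice of $\rho$ and $\rho'$, I am free to take $\rho$ and $\rho'$ to be genuine real-valued defining functions for $M$ and $M'$. Writing $\rho(Z,\xi)=\sum c_{\alpha\beta}Z^\alpha\xi^\beta$, real-valuedness of $\rho(Z,\bar Z)$ is equivalent to $\bar c_{\alpha\beta}=c_{\beta\alpha}$, which is precisely the relation $\sigma(\rho)=\rho$; similarly $\sigma(\rho')=\rho'$, i.e.\ $\bar\rho'(B,A)=\rho'(A,B)$ together with the pointwise rule $\overline{\rho'(A,B)}=\bar\rho'(\bar A,\bar B)$.

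The heart of the argument is to apply $\sigma$ to \eqref{e:1}. The right-hand side transforms as $\sigma(a\rho)=\sigma(a)\,\sigma(\rho)=\sigma(a)\,\rho$. For the left-hand side I set $\Phi(Z,\xi):=\rho'(H(Z),\bar H(\xi))$ and compute
\[
\sigma(\Phi)(Z,\xi)=\overline{\rho'\big(H(\bar\xi),\bar H(\bar Z)\big)}=\bar\rho'\big(\overline{H(\bar\xi)},\,\overline{\bar H(\bar Z)}\big)=\bar\rho'\big(\bar H(\xi),H(Z)\big)=\rho'\big(H(Z),\bar H(\xi)\big),
\]
using $\overline{H(\bar\xi)}=\bar H(\xi)$, $\overline{\bar H(\bar Z)}=H(Z)$, and the Hermitian symmetry of $\rho'$ from the previous step; hence $\sigma(\Phi)=\Phi$. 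Comparing the two sides of the transformed identity gives $\sigma(a)\,\rho=a\,\rho$, and since $\C[[Z,\xi]]$ is an integral domain and $\rho\not\equiv 0$, we conclude $\sigma(a)=a$. Therefore $\sigma(I)=I(\sigma(a),\sigma(\rho))=I(a,\rho)=I$, which completes the proof.

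I expect the only delicate point to be the conjugation/variable-swap bookkeeping in the displayed computation of $\sigma(\Phi)$: one must apply the conventions $\bar u(y)=\overline{u(\bar y)}$ and $\bar H(\xi)=\overline{H(\bar\xi)}$ consistently and invoke the Hermitian symmetry of $\rho'$ at exactly the right moment. Everything else—the automorphism properties of $\sigma$, the reduction to real defining functions, and the cancellation of $\rho$ in an integral domain—is routine.
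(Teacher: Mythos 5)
Your proof is correct and follows essentially the same route as the paper: the paper also chooses real-valued (Hermitian) defining functions $\rho$, $\rho'$, notes that the ideal $I$ is independent of this choice, and concludes that $a$ is Hermitian, from which the lemma is immediate. Your involution $\sigma$ is just a clean formalization of this, and your computation of $\sigma(\Phi)$ together with the cancellation of $\rho$ in the integral domain $\C[[Z,\xi]]$ supplies precisely the details the paper compresses into ``hence, the corresponding function $a$ is real-valued as well.''
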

Recall that the radical of $I$ is defined by $\sqrt I = \{f: f^q \in I \text{ for some integer } q\}$.
\begin{proof} Recall that we can choose real-valued defining functions $\rho$ and $\rho'$ for $M$ and $M'$ respectively and, hence, the corresponding function $a$ is real-valued as well. At the level of formal power series, this is equivalent to $\rho$, $\rho'$, $a$ being Hermitian; i.e. if $u$ equals $\rho$, $\rho'$, or $a$, then $u(Z,\xi) = \bar u(\xi, Z)$. The conclusion of Lemma \ref{her} follows immediately.
\end{proof}
In the following lemma, we use normal coordinates $Z=(z,w)$, $\xi=(\chi,\tau)$ as above, and $\bar Q(\chi,Z)=\bar Q(\chi,z,w)$ is the power series appearing in \eqref{rhonc2}. Recall that $a(Z,(\chi,\bar Q(\chi,Z)))\not\equiv 0$.
\begin{lemma}\label{primedecomposition} Assume that $a(0,0)=0$ and let
\[
a(Z,(\chi,\bar Q(\chi,Z))) = a_1^{t_1}(Z,\chi) \dots a_k^{t_k}(Z,\chi)
\]
be the unique (modulo units) factorization into irreducible (or prime) elements in $\bC^[[Z,\chi]]\subset \C[[Z,\xi]]$. Let $I_j = I(a_j,\rho)$. Then, $$\sqrt I = \cap_{j=1}^k I_j$$ is a prime decomposition of $\sqrt I$.
\end{lemma}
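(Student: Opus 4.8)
The plan is to reduce the entire statement to a computation of the radical of a \emph{principal} ideal in a formal power series ring, exploiting the fact that in normal coordinates $\rho$ is monic of degree one in $\tau$. By the choice \eqref{rhonc2} we have $\rho(Z,\xi)=\tau-\bar Q(\chi,Z)$, a Weierstrass polynomial of degree one in $\tau$ with coefficients in $\C[[Z,\chi]]$. Hence the substitution $\tau\mapsto\bar Q(\chi,Z)$ induces a ring isomorphism
\[
\pi\colon \C[[Z,\xi]]/(\rho)\xrightarrow{\ \sim\ }\C[[Z,\chi]],
\]
and I let $q\colon\C[[Z,\xi]]\to\C[[Z,\chi]]$ denote the surjection obtained by composing the quotient map with $\pi$; its kernel is exactly $(\rho)$. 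Under $q$ the generator $a$ of $I$ is sent to $a(Z,(\chi,\bar Q(\chi,Z)))$, so $q(I)=\bigl(a(Z,(\chi,\bar Q(\chi,Z)))\bigr)$ is the principal ideal generated by the product $a_1^{t_1}\cdots a_k^{t_k}$. By the observation recorded just before the lemma, this generator is not identically zero, so $q(I)$ is a proper nonzero ideal.

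Next I would work entirely in the ring $R:=\C[[Z,\chi]]$, which is a regular local ring and therefore a unique factorization domain. Since the $a_j$ are pairwise non-associate irreducibles, each $(a_j)$ is a distinct prime ideal, and the $a_j$ are pairwise coprime. A standard UFD computation then gives
\[
\sqrt{\bigl(a_1^{t_1}\cdots a_k^{t_k}\bigr)}=(a_1\cdots a_k)=\bigcap_{j=1}^k (a_j):
\]
indeed $f$ lies in the radical iff $\prod_j a_j^{t_j}\mid f^q$ for some $q$, iff $a_j\mid f$ for every $j$ (using primality of each $a_j$), iff $\prod_j a_j\mid f$ (using pairwise coprimality), and $(a_1\cdots a_k)=\bigcap_j(a_j)$ for the same reason. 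This is already a prime decomposition of $\sqrt{q(I)}$ in $R$.

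Finally I would transport this back to $\C[[Z,\xi]]$ using that $q$ is surjective with $\ker q=(\rho)\subseteq I$, so radicals, preimages, and finite intersections all commute with passage to the quotient. Concretely, $\sqrt{I}=q^{-1}\bigl(\sqrt{q(I)}\bigr)=q^{-1}\Bigl(\bigcap_j(a_j)\Bigr)=\bigcap_j q^{-1}\bigl((a_j)\bigr)$, and $q^{-1}((a_j))=(a_j,\rho)=I_j$ precisely because $a_j$ already lies in the subring $\C[[Z,\chi]]$ and is fixed by $q$. Each $I_j$ is prime, since $\C[[Z,\xi]]/I_j\cong R/(a_j)$ is a domain, the latter because $(a_j)$ is prime. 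This yields $\sqrt{I}=\bigcap_{j=1}^k I_j$ as a prime decomposition. I do not expect a genuine obstacle: the only points needing care are the identification $q^{-1}((a_j))=I_j$ (which rests on the $a_j$ belonging to $\C[[Z,\chi]]$) and the compatibility of the radical with the quotient, and both become routine once the isomorphism $\pi$ is in hand. The real content of the lemma is simply this passage from the principal ideal $(a \bmod \rho)$ to $I$ itself.
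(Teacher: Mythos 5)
Your proof is correct and is essentially the paper's own argument in cleaner algebraic packaging: the paper's repeated substitutions $\tau = \bar Q(\chi,Z)$ are precisely your quotient map $q$, and its by-hand divisibility manipulations are your UFD computation in $\C[[Z,\chi]]$. The only points needing care --- that $q$ is well defined with kernel exactly $(\rho)$ (division by a monic degree-one polynomial in $\tau$) and that $q^{-1}\bigl((a_j)\bigr)=I_j$ --- are exactly the ones you flag, and both hold.
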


\begin{proof} Recall that we may choose $\rho(Z,\xi) = \tau - \bar Q(\chi,Z)$. It then follows that for some $\tilde a(Z,\xi)\in \C[[Z,\xi]]$
\[
a(Z,\xi) = a(Z,(\chi,\bar Q(\chi,Z))) + \tilde a(Z,\xi) \rho(Z,\xi).
\]
Hence $a\in I_j$ and so $I = I(a,\rho) \subset I_j$ for all $j = 1,\dots k$.

Next, we claim that, for each $j$, the ideal $I_j$ is prime. Indeed, fix $j$ and let $f,g \in \C[[Z,\xi]]$ such that $fg \in I_j$. Then
\[
f(Z,\xi)  g(Z,\xi) = r(Z,\xi) a_j (Z,\chi) + s(Z,\xi)  \rho(Z,\xi),
\]
for some $r,s\in \C[[Z,\xi]]$. If we substitute $\xi = (\chi,\bar Q(\chi,Z))$ in this identity, then we obtain
\[
f(Z,(\chi,\bar Q(\chi,Z))) g(Z,(\chi,\bar Q(\chi,Z))) = r(Z,(\chi,\bar Q(\chi,Z))) a_j (Z,\chi)
\]
Since $a_j(Z,\chi)$ is irreducible, we deduce that it divides, say, $f(Z,(\chi,\bar Q(\chi,Z)))$. It follows that
\begin{equation}
\begin{aligned}
f(Z,\xi) &= f(Z,(\chi,\bar Q(\chi,Z))) + \tilde f(Z,\xi)  \rho(Z,\xi)\\
&= r(Z,(\chi,\bar Q(\chi,Z))) a_j (Z,\chi)+ \tilde f(Z,\xi)  \rho(Z,\xi)
\end{aligned}
\end{equation}
for some $\tilde f(Z,\xi)$ and so $f$ belongs to $I_j$. We conclude that $I_j$ is prime, as desired. Since $I\subset I_j$, for all $j$, and $I_j$ is prime, we conclude that $\sqrt I\subset I_j$, for all $j$, proving $\sqrt I\subset \cap_{j=1}^kI_j$.

Now assume $f(Z,\xi) \in I_j$ for all $j$. Then we can write, for any fixed $j$,
\[
f(Z,\xi) = r(Z,\xi) a_j(Z,\chi) + s(Z,\xi)  \rho(Z,\xi),
\]
for some power series $r$ and $s$.
If we substitute $\tau = \bar Q(\chi,Z)$, then we get
\[
f(Z,(\chi, \bar Q(\chi,Z))) = r(Z, (\chi, \bar Q(\chi,Z)))  a_j(Z,\chi).
\]
Thus $f(Z,(\chi, \bar Q(\chi,Z))) $ is divisible by $a_j(Z,\chi)$ for all $j = 1,2,\dots k$. It then follows that, for some integer $l$, $f(Z,(\chi, \bar Q(\chi,Z)))^l$ is divisible by $a(Z,(\chi,\bar Q(\chi,Z)))$. We conclude that
\[
f(Z,\xi)^l= f(Z,(\chi, \bar Q(\chi,Z)))^l + \tilde f(Z,\xi) \rho(Z,\xi),
\]
for some $\tilde f(Z,\chi)$, belongs to $I$ and hence $f(Z,\xi) \in \sqrt I$. Consequently, $\cap_{j=1}^k I_j\subset \sqrt I$. The proof is complete.
\end{proof}
A key point in the proof of our main results is the following lemma.
\begin{lemma}\label{keylemma} Assume that $a(0,0) = 0$. Then, for each $j$,  either $\delta_l(Z) \in I_j$ for every $l=1,2,\dots m$, or $\bar \delta_l(\xi) \in I_j$ for every $l=1,2,\dots m$. As above, $\delta_l(Z), l=1,2,\dots m$ denotes the collection of all $(n+1) \times (n+1)$-minors of $H_Z(Z)$ that do not vanish identically.
\end{lemma}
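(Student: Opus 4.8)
The plan is to fix $j$ and work modulo the prime ideal $I_j$. Since $I_j$ is prime (Lemma~\ref{primedecomposition}), the quotient $\bC[[Z,\xi]]/I_j$ is an integral domain; let $K_j$ be its fraction field and read all matrix identities over $K_j$. The two alternatives in the statement translate into rank conditions: $\delta_l(Z)\in I_j$ for every $l$ is equivalent to $\rk_{K_j} H_Z\le n$, and $\bar\delta_l(\xi)\in I_j$ for every $l$ is equivalent to $\rk_{K_j}\bar H_\xi\le n$. Thus the lemma asserts that one cannot simultaneously have $\rk_{K_j}H_Z=n+1$ and $\rk_{K_j}\bar H_\xi=n+1$, and I would argue by contradiction, assuming both ranks are $n+1$.

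The main tool is the second-order consequence of the mapping identity $\rho'(H,\bar H)=a\rho$. Differentiating once in $Z$ and once in $\xi$ and reducing modulo $I_j$ (where $a\equiv 0$ and $\rho\equiv 0$, the former by Lemma~\ref{primedecomposition}) yields
\[
A:=H_Z^t\,\rho'_{Z'\xi'}(H,\bar H)\,\bar H_\xi\equiv a_Z\,\rho_\xi+\rho_Z\,a_\xi \pmod{I_j},
\]
a sum of two rank-one matrices, so $\rk_{K_j}A\le 2$; I would also record the first-order identities $\rho'_{Z'}(H,\bar H)H_Z\equiv 0$ and $\rho'_{\xi'}(H,\bar H)\bar H_\xi\equiv 0$. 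Writing $L:=\rho'_{Z'\xi'}(H,\bar H)$ and using that some $r\times r$ minor of $\rho'_{Z'\xi'}$ is a unit at the origin (so $\rk_{K_j}L\ge r$), two applications of the Sylvester rank inequality to $A=(H_Z^t L)\,\bar H_\xi$ with both outer factors of rank $n+1$ give
\[
\rk_{K_j}A\ge 2(n+1)+r-2(N+1)=2n-(2N-r)\ge 2,
\]
the last step by \eqref{basiccodim}. Hence $\rk_{K_j}A=2$ and every inequality is an equality; in particular $2N-r=2n-2$.

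Next I would extract a distinguished kernel vector. A dimension count using $\rk_{K_j}A\le 2$, $\rk_{K_j}L\ge r$, and $2N-r\le 2n-2$ produces a nonzero $v_0=\bar H_\xi c_0\in\ker L$. Then $Ac_0=H_Z^t Lv_0=0$, so from the explicit form of $A$,
\[
a_Z\,(\rho_\xi c_0)+\rho_Z\,(a_\xi c_0)=0.
\]
Since $\rho_Z\neq 0$ (in normal coordinates $\rho_w=1$), if $\rho_\xi c_0\neq 0$ then $a_Z$ is a $K_j$-multiple of $\rho_Z$, whence $\rk_{K_j}A\le 1$; re-running the Sylvester count then gives $2N-r\ge 2n-1$, contradicting \eqref{basiccodim}. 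Therefore $\rho_\xi c_0=0$, and the displayed relation forces $a_\xi c_0=0$ as well. In other words $v_0$ lies in the image under $\bar H_\xi$ of the CR directions $\ker\rho_\xi$ and is Levi-null for $M'$.

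The hard part is precisely this borderline configuration: the pure rank count is inconclusive at the equality $2N-r=2n-2$, and one must exclude such a $v_0$ using more than linear algebra. Here I would invoke the Hermitian structure of $I_j$ (Lemma~\ref{her}), which relates the prime $I_j$ to its Hermitian conjugate and produces, by the symmetric argument, a nonzero vector $u_0$ in the column space of $H_Z$ that is Levi-null and annihilated by $\rho_Z$ and $a_Z$. Feeding these null directions back into the reality identity for $\rho'$ (which ties $\rho'_{Z'}$, $\rho'_{\xi'}$ and the Levi form $L$ together), in combination with the previously established fact that $a(Z,(\chi,\bar Q(\chi,Z)))\not\equiv 0$, should contradict $\rk_{K_j}A=2$. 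Finally, the $Z\leftrightarrow\xi$ duality built into the Hermitian symmetry is what converts the impossibility of ``both full rank'' into the stated dichotomy. Everything up to the extraction of $v_0$ is bookkeeping with rank inequalities; the delicate point to get right is this last reality argument in the equality case.
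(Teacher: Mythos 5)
Your setup and the rank bookkeeping are sound, and up to the borderline they repackage the same linear algebra as the paper: working over the fraction field $S$ of $\C[[Z,\xi]]/I_j$, the negation of the dichotomy gives $\rk_S \hat H_Z=\rk_S\hat{\bar H}_\xi=n+1$, and the once-in-$Z$, once-in-$\xi$ differentiated identity reduces mod $I_j$ to a matrix of rank at most $2$, while Sylvester's inequality bounds its rank below by $2n-(2N-r)$. But this yields a contradiction only when $2N-r\le 2n-3$; the hypothesis \eqref{basiccodim} allows the equality $2N-r=2n-2$, and that is precisely the case your proposal does not close. The final step of your argument ("should contradict $\rk_{K_j}A=2$") is a hope rather than a proof, and the route you indicate is obstructed as stated: Lemma \ref{her} makes $I$ and $\sqrt I$ Hermitian, but the individual primes $I_j$ need not be Hermitian --- conjugation only permutes the minimal primes of $\sqrt I$ --- so the ``symmetric argument'' produces your vector $u_0$ over the fraction field of $\C[[Z,\xi]]/I_{j'}$ for a possibly different prime $I_{j'}$, and it cannot be combined with $v_0$ in a single computation over $S$.

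The ingredient that closes the equality case in the paper's proof is concrete and is missing from your outline. Besides the $n-1$ ``Levi image'' vectors $Y_k=\hat\Phi\,\hat{\bar H}_\xi\hat V_k$ (built from directions $\hat V_k$ annihilated by $\hat\rho_\xi$ and $\hat a_{j,\xi}$, i.e., a basis of the kernel of your rank-$2$ right-hand side), the \emph{first-order} identity $H_Z^t\,\rho'_Z(H,\bar H)=a_Z\rho+a\,\rho_Z\equiv 0 \pmod{I_j}$ supplies one further vector $Y_n=\hat{\rho'}_Z$ lying in $\ker\hat H_Z^t$. Choosing normal coordinates in the \emph{target}, $\rho'=w-Q'(z,\chi,\tau)$, the last row of $\rho'_{Z\xi}$ vanishes identically, so every $Y_k$, $k\le n-1$, has last component $0$, whereas $Y_n$ has last component $1$; hence $Y_n$ is independent of the others, and the collection $Y_1,\dots,Y_n\subset\ker\hat H_Z^t$ has rank at least $n+r-N-1$ instead of $n+r-N-2$. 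Comparing with $\dim\ker\hat H_Z^t=N-n$ gives $2N-r\ge 2n-1$, which contradicts \eqref{basiccodim} including the equality case. Without this extra vector (equivalently, this use of target normal coordinates), or a genuine substitute for it, your argument proves the lemma only under the stronger hypothesis $2N-r\le 2n-3$, not as stated.
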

\begin{proof} If we differentiate \eqref{e:1} with respect to $Z$ we obtain
\begin{equation}\label{e:2}
 H_Z(Z)^t \, \rho'_Z(H(Z),\bar H(\xi)) = a_Z(Z,\xi)\rho(Z,\xi) + a(Z,\xi) \rho_Z(Z,\xi).
\end{equation}
Here, as above, the Jacobian matrix $H_Z$ is regarded as an $(N+1)\times (n+1)$-matrix, the superscript $t$ denotes transposition of a matrix, and the gradient vectors are regarded as column vectors. Let $K=\C[[Z]]$. Then $\C[[Z,\xi]]$ can be identified with the ring $K[[\xi]]$.
We can regard equation \eqref{e:2} as an identity in $(K[[\xi]])^{N+1}$. Thus, we may rewrite this identity as follows
\begin{equation}\label{e:3}
H_Z^t \, \rho'_Z(\bar H(\xi)) = a_Z(\xi)\rho(\xi) + a(\xi) \rho_Z(\xi),
\end{equation}
where we have used the notation $\rho'_Z(\xi') := \rho'_Z(H(Z),\xi')$; $H^t_Z$ is a matrix with components in the field $K$ and e.g.\ $a(\xi)$ is a formal power series in $\xi$ whose coefficients are elements in $K$.

Since $I_j$ is proper prime ideal of $K[[\xi]]$, it follows that $K[[\xi]]/I_j$ is an integral domain. Let us fix a $j$, and define $S$ to be the field of
fractions of $K[[\xi]]/ I_j$. Denote by $\pi$ the canonical projection: $\pi\colon K[[\xi]] \to K[[\xi]]/I_j,\ x \mapsto x + I_j$.

Now, let $L$ be a formal vector field (or a derivation) in $K[[\xi]]$, i.e.\
\[
 L = \sum_{l=1}^{n+1} \beta_l(\xi) \frac{\partial}{\partial \xi_l}
\]
where $\beta_l(\xi) \in K[[\xi]]$. We say that $L$ is Zariski tangent to $I_j$ if $L(f) = 0 \mod I_j$ for all $f\in I_j$, or equivalently,
\begin{equation}\label{e:4}
 \sum_{l=1}^{n+1} \beta_l(\xi) \frac{\partial a_j}{\partial \xi_l} (\xi)=\sum_{l=1}^{n+1} \beta_l(\xi) \frac{\partial \rho}{\partial \xi_l}(\xi) = 0 \mod I_j.
\end{equation}
It is straightforward to see that there are at least $n-1$ formal vector fields $L_1,\dots L_{n-1}$ tangent to~$I_j$,
\begin{equation}\label{Lk}
L_k = \sum_l \beta^{k}_l(\xi)\partial/ \partial \xi_l,
\end{equation}
such that
the collection of corresponding vectors in $S^{n+1}$:
$$\hat V_k = (\pi(\beta^{k}_1(\xi)), \dots, \pi(\beta^{k}_{n+1}(\xi))),\quad k = 1,2,\dots n-1,$$
is linearly independent over the quotient field $S$ of $K[[\xi]]/I_j$. Indeed, let us consider the following system of two
linear equations over $S$ with unknowns $X_l$, $l=1,2,\dots n+1$,
\[\sum_{l=1}^{n+1}X_l\,  \pi(a_{j,\xi_l}(\xi)) = 0, \quad \sum_{l=1}^{n+1}X_l \, \pi(\rho_{\xi_l}(\xi)) = 0.\]
This system has at least $n-1$ linearly independent solutions in $S^{n+1}$, denoted by
$$\tilde V_k=(\tilde\beta^k_1,\ldots,\tilde\beta^k_{n+1}),\quad k=1,\ldots,n-1.$$
where $\tilde \beta^k_l\in S$. Since each component $\tilde\beta^k_l$ is a fraction $\tilde\beta^k_l=\mu^k_l/\nu^k_l$ with $\mu^k_l,\nu^k_l\in K[[\xi]]/I_j$, we can clear the denominators and obtain $n-1$ linearly independent vectors $$\hat V_k=(\hat\beta^k_1,\ldots,\hat\beta^k_{n+1}),$$ whose components belong to $K[[\xi]]/I_j$. Since $\pi \colon K[[\xi]] \to K[[\xi]]/I_j$ is surjective, we can find $\beta^k_l(\xi)\in K[[\xi]]$ such that $\pi(\beta^k_l(\xi))=\hat\beta^k_l$. The corresponding formal vector fields $L_k$, given by \eqref{Lk}, satisfy the desired properties.

We now apply the vector fields $L_k$ to \eqref{e:3}. It follows from \eqref{e:4} and the fact that $I\subset I_j$ (Lemma \ref{primedecomposition}) that
\[
 L_k\big(a_Z(\xi) \rho(\xi) + a(\xi) \rho_Z(\xi)\big) = 0 \mod I_j \ \text{for} \ k =1,2,\dots n-1.
\]
Consequently, we have the following identity
\begin{equation}\label{e:5}
 L_k\big(H_Z^t \, \rho'_Z(\bar H(\xi))\big) = 0 \mod I_j  \ \text{for} \ k =1,2,\dots n-1.
\end{equation}
Using chain rule, we can rewrite \eqref{e:5} in matrix notations as follows
\begin{equation}\label{e:6}
H_Z^t\,   \Phi(\xi)\,  \bar H_{\xi}(\xi)\,  V_k(\xi) = 0 \mod I_j \ \text{for} \ k =1,2,\dots n-1,
\end{equation}
where we have used the notation $V_k(\xi) =(\beta_1^{k}(\xi),\dots, \beta_{n+1}^{k}(\xi))^t$ and $\Phi(\xi)$ for the $(N+1)\times(N+1)$ matrix $(\rho'_{Z\xi}(\bar H(\xi)))$ . Note that since the Levi form of $M'$ at $0$ (which is represented by the restriction of $\rho'_{Z\xi}(0,0)$ to the holomorphic tangent space of $M'$ at $0$) has rank $r$ by assumption, there is an $r\times r$-minor of $\Phi$ which is a unit in $K[[\xi]]$.

We now go to the quotient field $S$ of $K[[\xi]]/I_j$. We will put a hat over elements of $K[[\xi]]$ (including vectors and matrices with elements in $K[[\xi]]$) to indicate
 their images in $K[[\xi]]/I_j$ under the canonical projection $\pi$. Thus, \eqref{e:6} implies
\begin{equation}\label{e:7}
 \hat H_Z^t \, \hat \Phi \, \hat{\bar{H_{\xi}}} \, \hat V_k = 0  \ \text{for} \ k =1,2,\dots n-1.
\end{equation}
Let us assume, in order to reach a contradiction, that there is at least one $\delta_l(Z)\not\in I_j$ and at least one $\bar \delta_{l'}(\xi) \not\in I_j$. Consequently, the corresponding minors $\hat\delta_l$ and $\hat{\bar \delta}_{l'}$ do not vanish in $S$ and it follows that the matrices $\hat H_Z^t $ and $\hat{\bar{H_{\xi}}}$ have rank $n+1$ over the field $S$. Furthermore, since there is an $r\times r$-minor of the matrix $\Phi$
which is a unit in $K[[\xi]]$, we deduce that $\hat \Phi$ has rank at least $r$ over $S$.
Now, consider the following collection of vectors in $S^{N+1}$:
\begin{equation}\label{e:6.1}
 Y_k = \hat \Phi \ \hat{\bar{H_{\xi}}} \ \hat V_k, \quad k=1,\dots n-1 \quad \text{and} \quad Y_n = (\hat {\rho'}_{z_1}, \dots \hat{ \rho'}_{z_{N+1}}).
\end{equation}
Here we recall that $\hat {\rho'}_{z_l} = \pi(\rho'_{z_l}(\bar H(\xi)))$.
We claim that the rank of the collection of vectors $Y_1,\ldots Y_n\in S^{N+1}$ is at least $n+r-N-1$. Indeed, since $\hat\Phi$ has rank at least $r$ over $S$ and the $(N+1)\times(n+1)$-matrix $\hat{\bar H}_\xi$
has full rank ($=n+1)$, we deduce that the collection $Y_1,\ldots Y_{n+1}$ has rank at least $n+r-N-2$. On the other hand, observe that in normal coordinates in the target space we may choose $\rho'(Z,\xi) = w - Q'(z,\chi,\tau)$ and, hence, the last row
of $\hat \Phi$ contains only zeros resulting in the last component of each $Y_k$, for
$k=1,\dots n-1$, being 0. On the other hand, the last component of $Y_n$ is $1$, so
that $Y_n$ cannot be a linear combination of $Y_k$ for $k=1,\dots, n-1$. The claim that the rank of the collection $\{Y_k\}_{k=1}^{n}$ is $n+r-N-1$ follows. To complete the proof of the lemma, we observe from \eqref{e:3} and \eqref{e:7} that $ \hat H_Z^t \, Y_k = 0$ for $k=1,\dots n$. Since the $(n+1)\times(N+1)$-matrix $\hat H_Z^t$ has rank $n+1$, we deduce that $(N+1) - (n+1) \ge n+r-N-1$. This implies $2N-r \ge 2n-1$ which contradicts \eqref{basiccodim}. The proof of Lemma \ref{keylemma} is complete.
\end{proof}

\begin{proposition}\label{ridiculous} Suppose that $a(0,0)=0$. Then there are $B(Z), C(Z) \in \C[[Z]]$ such that
\begin{equation}\label{aform}
a(Z,\xi) = B(Z) \bar C(\xi)t(Z,\xi) + \tilde a(Z,\xi) \rho(Z,\xi),
\end{equation}
for some $\tilde a(Z,\xi), t(Z,\xi)\in \bC[[Z,\xi]]$ such that $t(Z,\xi)$ is a unit.
Moreover, each irreducible divisor of $B(Z)$ and each irreducible divisor of $C(Z)$ divides $\delta_l(Z)$ for every $l=1,\ldots,m$.
\end{proposition}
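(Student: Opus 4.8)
The plan is to work modulo $\rho$ and reduce the statement to a factorization of the single restricted series $\hat a(Z,\chi):=a(Z,(\chi,\bar Q(\chi,Z)))\in\C[[Z,\chi]]$. Choosing $\rho=\tau-\bar Q(\chi,Z)$, the substitution $\tau\mapsto\bar Q(\chi,Z)$ induces a ring isomorphism $\phi\colon \C[[Z,\xi]]/(\rho)\xrightarrow{\sim}\C[[Z,\chi]]$ (one eliminates $\tau$, since $\rho$ is regular of order one in $\tau$), and each $\alpha(Z,\xi)$ differs from its image $\phi(\alpha)\in\C[[Z,\chi]]$ by a multiple of $\rho$. Under $\phi$ the target identity \eqref{aform} becomes equivalent to a factorization $\hat a=B(Z)\,\bar C(\chi,\bar Q(\chi,Z))\,u(Z,\chi)$ with $u$ a unit; once such a factorization is produced, lifting $\bar C(\chi,\bar Q(\chi,Z))=\phi(\bar C(\xi))$ and a unit lift $t$ of $u$ back through $\phi$ gives \eqref{aform}, with $\tilde a\rho$ absorbing the kernel of $\phi$. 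So the whole problem becomes: factor $\hat a$ in the UFD $\C[[Z,\chi]]$ and control its $\chi$-independent and $\chi$-dependent parts.

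I would take the prime factorization $\hat a=a_1^{t_1}\cdots a_k^{t_k}$ of Lemma~\ref{primedecomposition}, with the $a_j$ pairwise non-associate so that the $I_j=I(a_j,\rho)$ are exactly the distinct minimal primes of $\sqrt I$, and split the factors according to whether $a_j$ depends on $\chi$. I collect the $\chi$-independent factors into $B(Z)\in\C[[Z]]$ and set $R(Z,\chi):=\hat a/B(Z)$, whose irreducible factors are precisely the $\chi$-dependent $a_j$. The two things I need are: (i) every $\chi$-independent factor divides each minor $\delta_l$, so the irreducible divisors of $B$ do; and (ii) $R$ equals $\bar C(\chi,\bar Q(\chi,Z))$ times a unit for some $C(Z)\in\C[[Z]]$ whose irreducible divisors also divide each $\delta_l$. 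For the divisibility used in (i) I first record that a $\C[[Z]]$-irreducible $f$ stays prime in $\C[[Z,\chi]]$, since $\C[[Z,\chi]]/(f)\cong(\C[[Z]]/(f))[[\chi]]$ is a domain; hence the $\C[[Z,\chi]]$-factorization of $\delta_l(Z)$ is just its $\C[[Z]]$-factorization, so if $a_j\in\C[[Z]]$ and $\delta_l\in I_j$ — which by the characterization in the proof of Lemma~\ref{primedecomposition} means $a_j\mid\delta_l$ — then $a_j$ is, up to a unit, an irreducible divisor of $\delta_l$.

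The heart of the argument, and the step I expect to be the main obstacle, is the bookkeeping in (i) and (ii), which is driven entirely by the Hermitian symmetry. Since $\sqrt I$ is Hermitian (Lemma~\ref{her}), conjugation $\sigma\colon\alpha(Z,\xi)\mapsto\bar\alpha(\xi,Z)$ permutes the minimal primes $I_j$, and because $\sigma(\delta_l(Z))=\bar\delta_l(\xi)$ it interchanges the two alternatives of Lemma~\ref{keylemma}: all $\delta_l\in I_j$ iff all $\bar\delta_l\in\sigma(I_j)$. I would use this first to finish (i): a $\chi$-independent factor $b_j$ lying only in the ``$\bar\delta_l$'' alternative would have $\sigma$-partner $I_{j'}=\sigma(I_j)$ in the ``$\delta_l$'' alternative, hence $\chi$-independent by the previous step, yet applying $\phi$ to $\sigma(I_{j'})=I(\bar b_j(\xi),\rho)=I_j$ gives $a_{j'}$ associate to $\bar b_j(\chi,\bar Q(\chi,Z))$, which depends on $\chi$ — a contradiction; thus every $\chi$-independent factor is in the ``$\delta_l$'' alternative and divides each $\delta_l$. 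For (ii), a $\chi$-dependent $a_j$ cannot be in the ``$\delta_l$'' alternative (else it would be associate to an element of $\C[[Z]]$), so it lies in the ``$\bar\delta_l$'' alternative; its partner $I_{j'}=\sigma(I_j)$ lies in the ``$\delta_l$'' alternative, so $a_{j'}$ is associate to an irreducible $b_{j'}(Z)\in\C[[Z]]$ dividing every $\delta_l$. Applying $\phi$ to $\sigma(I_{j'})=I(\bar b_{j'}(\xi),\rho)=I_j$ yields the ideal equality $(a_j)=(\bar b_{j'}(\chi,\bar Q(\chi,Z)))$ in $\C[[Z,\chi]]$, whence $a_j$ and $\bar b_{j'}(\chi,\bar Q(\chi,Z))$ are associate. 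Setting $C(Z):=\prod b_{j'}(Z)^{t_j}$ over the $\chi$-dependent indices then gives $\bar C(\chi,\bar Q(\chi,Z))=R\cdot(\text{unit})$ with the irreducible divisors of $C$ dividing each $\delta_l$. The delicate points to verify are that distinct $a_j$ yield distinct partners $b_{j'}$ (so multiplicities transfer correctly) and that $\bar b_{j'}(\chi,\bar Q(\chi,Z))$ genuinely depends on $\chi$ — both of which follow from the injectivity of $\phi$ together with the normal-form vanishing $\bar Q(0,Z)\equiv 0$. Combining (i) and (ii) gives $\hat a=B(Z)\,\bar C(\chi,\bar Q(\chi,Z))\,u$, and lifting through $\phi$ produces \eqref{aform}.
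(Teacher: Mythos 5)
Your overall plan (reduce mod $\rho$ via $\phi$, factor $\hat a$ in $\C[[Z,\chi]]$, and sort the prime factors into a $Z$-part $B$ and a part of the form $\bar C(\chi,\bar Q(\chi,Z))$, with divisibility of the minors coming from Lemma \ref{keylemma}) is the same skeleton as the paper's, and your step (ii) is a legitimate variant: where the paper handles the alternative ``$\bar\delta_l(\xi)\in I_j$ for all $l$'' by the direct substitution $w=Q(z,\xi)$, you use Hermitian symmetry of $\sqrt I$ (Lemma \ref{her}) to pass to the conjugate minimal prime $\sigma(I_j)$ and pull the Case-1 conclusion back through $\phi$. That works, granted the irredundancy of the decomposition (which holds since the $a_j$ are pairwise non-associate).

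The genuine gap is in step (i), precisely at the point you flag as delicate. Your contradiction there hinges on the claim that $\bar b(\chi,\bar Q(\chi,Z))$ must genuinely depend on $\chi$ for an irreducible $b(Z)\in\C[[Z]]$. This is false in the generality of Proposition \ref{ridiculous}, which carries \emph{no finite type hypothesis}: if $M$ is Levi-flat, then $\bar Q(\chi,Z)=w$, and for $b(Z)=w$ one gets $\bar b(\chi,\bar Q(\chi,Z))=w$, independent of $\chi$. Indeed, the statement ``$b(Z)$ cannot be associate to $c(\chi,\bar Q(\chi,Z))$'' is exactly the content of Lemma \ref{stupid}, whose proof \emph{requires} finite type (it ends by contradicting $\bar Q(\chi,z,0)\not\equiv 0$); it cannot be invoked, even implicitly, in Section 3. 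Your cited justification is also based on a wrong normalization: in normal coordinates $\bar Q(0,Z)=w$, not $\bar Q(0,Z)\equiv 0$ (the identity $Q(z,\chi,\bar Q(\chi,z,w))\equiv w$ forces $Q(0,\chi,\tau)=Q(z,0,\tau)=\tau$). The underlying problem is that your classification of factors by $\chi$-dependence does not match the dichotomy that the argument actually provides: a $\chi$-independent factor $a_j=b_j(Z)$ lying only in the ``$\bar\delta_l$'' alternative is not a contradiction; it is simply a factor of the form $\bar c_j(\chi,\bar Q(\chi,Z))\cdot(\text{unit})$ with $c_j$ dividing every $\delta_l$, and it should be absorbed into $C$ rather than $B$. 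This is how the paper proceeds: the factors are sorted according to the two alternatives of Lemma \ref{keylemma} (with overlap allowed), those with $\delta_l\in I_j$ contributing $b_j(Z)$ to $B$, and those with $\bar\delta_l(\xi)\in I_j$ contributing $c_j$ to $C$ via the substitution $w=Q(z,\xi)$; no claim about $\chi$-dependence is ever needed. If you re-sort your factors by that dichotomy (keeping your conjugation trick in place of the paper's substitution), your argument goes through; as written, step (i) fails.
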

\begin{proof} By Lemma~\ref{keylemma}, for each $j$, we have either $\delta_l(Z) \in I_j$ for all $l=1,\ldots,m$ or $\bar \delta_j(\xi) \in I_j$ for all $l$.
Let us first assume that $\delta_l(Z) \in I_j$ for all $l$. Consequently,
\[
\delta_l(Z) =  r_l(Z,\xi)\ a_j(Z,\chi) + s_l(Z,\xi)\ \rho(Z,\xi)
\]
for some $r_l$ and $s_l$.
By substituting $\tau=\bar Q(\chi,Z)$ we obtain
\[
\delta_l(Z) =  r_l(Z,(\chi,\bar Q(\chi,Z))) \ a_j(Z,\chi)
\]
Now, recalling that $a_j(Z,\chi)$ is irreducible, we conclude that
\begin{equation}
a_j(Z,\chi) = b_j(Z)\ u_l(Z,\chi)
\end{equation}
where $b_j(Z)$ is an irreducible (or prime) divisor of $\delta_l(Z)$ and $u_l(Z,\chi)$ is an unit. This holds for all $l=1,2,\dots m$ (with the same $b_j(Z)$, modulo units) and hence $b_j(Z)$ is an irreducible divisor of $\delta_l(Z)$ for every $l=1,\ldots,m$.

If $\bar \delta_l(\xi) \in I_j$ for all $l$, then we can write $\bar \delta_l(\xi) = r_l(Z,\xi) \ a_j(Z,\chi) + s_l(Z,\xi) \ \rho(Z,\xi)$. Thus, by substituting $w = Q(Z,\xi)$ this time, we obtain
\[
 \bar \delta_l(\xi) = r_l(Z,(\chi, \bar Q(\chi,Z))) \ a_j((z,Q(z,\xi)),\chi)
\]
Observe that $a_j((z,Q(z,\xi)),\chi)$ is also irreducible, a fact that follows easily from the identity
$$
a_j(Z,\chi)=a_j((z,w),\xi)=a_j((z,Q(z,\chi,\bar Q(\chi,z,w))),\chi),
 $$
 where we recall that $\xi=(\chi,\tau)$.
 It then follows as above that $a_j((z,Q(z,\xi)),\chi)$ is an irreducible divisor of $\bar \delta_l(\xi)$ and thus $a_j((z,Q(z,\xi)),\chi) = \bar c_j(\xi) \ v_l(Z,\xi)$
for some irreducible divisor $\bar c_j(\xi)$ of $\bar\delta_l(\xi)$ and unit $v_l(Z,\xi)$. Again, since $\bar\delta_l(\xi)\in I_j$ for every $l$, we conclude that $\bar c_j(\xi)$ divides $\bar\delta_l(\xi)$ for every $l$. If we substitute $\tau = \bar Q(\chi, Z)$, then we obtain
\[
 a_j(Z,\chi) = \bar c_j(\chi,\bar Q(\chi,Z)) \ \tilde v_l(Z,\chi),
\]
where $\tilde v_l(Z,\chi)=v(Z,(\chi,\bar Q(\chi,Z))$.
Putting all this together, we conclude (via Lemma \ref{primedecomposition}) that
\begin{align}
a(Z,(\chi,\bar Q(\chi,Z))) = a_1^{t_1}(Z,\chi) \cdots a_k^{t_k}(Z,\chi) = B(Z) \bar C(\chi,\bar Q(\chi, Z)) \ t(Z,\xi)
\end{align}
where $t(Z,\xi)$ is a unit.
By construction, every irreducible divisor of $B(Z)$ divides  $\delta_l(Z)$ for all $l$, and every irreducible divisor of $\bar C(\xi)$ divides $\bar \delta_l(\xi)$ for all $l$. We conclude that
\begin{align}\label{apple}
a(Z,\xi) &= a(Z,(\chi,\bar Q(\chi,Z)))  + \tilde a(Z,\xi)  \rho(Z,\xi)\\
&=B(Z)\ C(\chi,\bar Q(\chi, Z)) \ t(Z,\xi) + \tilde a(Z,\xi)  \rho(Z,\xi) \notag.
\end{align}
Similarly, we can also write
\[
 \bar C(\chi, \bar Q(\chi, Z))= \bar C(\xi)  + \tilde C(Z,\xi) \rho(Z,\xi),
\]
which by substituting into \eqref{apple} yields the desired form of $a(Z,\xi)$. The proof is complete.
\end{proof}

We may now prove the following result, which as explained above is a reformulation of Theorem \ref{main} in the formal setting (and hence has Theorem \ref{main} as a direct consequence).

\begin{theorem}\label{funny} Let $M$ and $M'$ be formal real hypersurfaces through $0$ in $\C^{n+1}$ and $\C^{N+1}$, respectively, and $H\colon (\bC^{n+1},0)\to (\bC^{N+1},0)$ a formal holomorphic mapping sending $M$ into $M'$. Assume that $$2N-r \le 2n-2,$$ where $r$ is the rank of Levi form of $M'$ at $0$. Assume further that the Jacobian matrix $H_Z$ is of generic rank $n+1$ (i.e.\ at least one $(n+1)\times(n+1)$-minor is not identically zero) and that the collection of its not-identically-zero $(n+1)\times (n+1)$-minors $\delta_1,\ldots,\delta_m$ have no nontrivial common divisor. Then, $H$ is transversal to $M'$ at $0$.
\end{theorem}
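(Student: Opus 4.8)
The plan is to derive the conclusion by contradiction, extracting it as an essentially immediate consequence of Proposition~\ref{ridiculous}. Recall from the Preliminaries that $H$ fails to be transversal to $M'$ at $0$ precisely when $a(0,0)=0$. So I would assume, toward a contradiction, that $a(0,0)=0$ and then show that the minors $\delta_1,\ldots,\delta_m$ must share a nontrivial common divisor, contradicting the hypothesis that they have none.

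First I would record the two facts already established in this section under the standing assumption $2N-r\le 2n-2$, namely that $a\not\equiv 0$ and that $a$ is not a multiple of $\rho$; in normal coordinates the latter says $a(Z,(\chi,\bar Q(\chi,Z)))\not\equiv 0$. This nonvanishing is exactly what guarantees that the factors produced by Proposition~\ref{ridiculous} are genuinely nonzero: since that factorization gives $a(Z,(\chi,\bar Q(\chi,Z)))=B(Z)\,\bar C(\chi,\bar Q(\chi,Z))\,t(Z,\xi)$ with $t$ a unit, neither $B$ nor $\bar C$ (and hence neither $B$ nor $C$) can vanish identically.

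Next, applying Proposition~\ref{ridiculous} with $a(0,0)=0$ in force, I would write
$$a(Z,\xi) = B(Z)\,\bar C(\xi)\,t(Z,\xi) + \tilde a(Z,\xi)\,\rho(Z,\xi),$$
where $t$ is a unit and, crucially, every irreducible divisor of $B$ and of $C$ divides each $\delta_l$. Evaluating at $(Z,\xi)=(0,0)$ and using $\rho(0,0)=0$ (valid since $0\in M$) together with $t(0,0)\ne 0$, the relation $a(0,0)=0$ forces $B(0)\,\bar C(0)=0$, so $B(0)=0$ or $C(0)=0$ (recall $\bar C(0)=\overline{C(0)}$). In either case the relevant series, say $B$, is a nonzero non-unit in the UFD $\bC[[Z]]$ and therefore admits a nontrivial irreducible factor $b(Z)$; by Proposition~\ref{ridiculous} this $b$ divides every $\delta_l$, i.e.\ it is a nontrivial common divisor of $\delta_1,\ldots,\delta_m$. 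This contradicts the hypothesis, so $a(0,0)\ne 0$ and $H$ is transversal at $0$.

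I do not anticipate any real obstacle in this final step, since all of the substantive work is packaged into Lemmas~\ref{keylemma}, \ref{primedecomposition} and Proposition~\ref{ridiculous}, together with the preliminary nonvanishing of $a$. The only point deserving a line of care is confirming that $B$ and $C$ are nonzero before speaking of their \emph{irreducible divisors}; that is precisely what the nontriviality of $a(Z,(\chi,\bar Q(\chi,Z)))$ secures.
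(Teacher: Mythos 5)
Your proposal is correct and follows essentially the same route as the paper's own proof: assume $a(0,0)=0$, invoke Proposition~\ref{ridiculous} to factor $a$ as $B(Z)\,\bar C(\xi)\,t(Z,\xi)$ modulo $\rho$, observe that $B(0)=0$ or $C(0)=0$, and derive a nontrivial common divisor of the $\delta_l$, contradicting the hypothesis. Your added care in checking that $B$ and $C$ are nonzero (via $a(Z,(\chi,\bar Q(\chi,Z)))\not\equiv 0$) before extracting irreducible factors is a welcome clarification that the paper leaves implicit in the word ``nontrivial.''
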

\begin{proof}
 Assume, in order to reach a contradiction, that $H$ is not transversal to $M'$ at $0$, i.e.\ $a(0,0)=0$ where $a(Z,\xi)$ is given by \eqref{e:1}. By Proposition~\ref{ridiculous}, there are nontrivial power series $B(Z)$ and $C(Z)$ such that \eqref{aform} holds, and such that every irreducible divisor of $B(Z)$ and every irreducible divisor of $C(Z)$ divides $\delta_l(Z)$ for all $l$. Also, note that at least one of $B(Z)$ or $C(Z)$ has to be 0 at $Z=0$, since $a(0,0)=\rho(0,0)=0$. This contradicts the assumption that $\delta_1(Z),\ldots, \delta_m(Z)$ have no common divisor. The proof is complete.
\end{proof}

We conclude this section by giving a description in the analytic case (i.e.\ $M$, $M'$, and $H$ are analytic) of the non-transversality locus $$\mathcal X:=\{(Z,\xi)\in \bC^{n+1}\times\bC^{n+1}\colon a(Z,\xi)=\rho(Z,\xi)=0\}=\{(Z,\xi)\in \mathcal M\colon a(Z,\xi)=0\}$$ of the complexified map $\mathcal H(Z,\xi)=(H(Z),\bar H(\xi))$  when \eqref{basiccodim} holds but the codimension of $W_H$ is one. (Of course, when the codimension of $W_H$ is at least two, we just proved that $\mathcal X$ is empty.) Recall (see e.g.\ Example \ref{ex1}) that $\mathcal X$ may be non-empty in this situation, but it turns out that the variety must have a special form.  In context of Segre preserving maps, a similar description was given in \cite{Zha}.

\begin{corollary}\label{Xlocus}
Let $M$ and $M'$ be real-analytic hypersurfaces through $0$ in $\C^{n+1}$ and $\C^{N+1}$, respectively, and $H\colon (\bC^{n+1},0)\to (\bC^{N+1},0)$ a holomorphic mapping sending $M$ into $M'$. Assume that $$2N-r \le 2n-2,$$ where $r$ is the rank of Levi form of $M'$ at $0$, and that the Jacobian matrix $H_Z$ is of generic rank $n+1$ (i.e.$W_H$ is a proper subvariety). If $H$ is not transversal to $M'$ at $0$ then the non-transversality locus $\mathcal X = \{(Z,\xi)\in \mathcal M : a(Z,\xi) = 0\}$ of the complexified map $\mathcal H=(H,\bar H)$ has a decomposition into irreducible components of the following form
\[
 \mathcal X = \mathcal X_1 \cup \dots \cup \mathcal X_k,
\]
where $\mathcal X_j$ is either of the form $\{(Z,\xi): Z \in W_i, \xi \in S^*_Z\}$ or $\{(Z,\xi): \xi \in W_i^*, Z \in S_{\bar\xi}\}$. Here, the $W_i$ denote the irreducible, codimension one components of $W_H$, $^*$ denotes the complex conjugate of a set, and
$$
S_p:=\{Z\in \bC^{n+1}\colon \rho (Z,\bar p)=0\}
$$ denotes the Segre varieties of $M$ at $p$. Moreover, $\mathcal X$ is Hermitian symmetric, i.e.\ $\mathcal X^*=\mathcal X$.
\end{corollary}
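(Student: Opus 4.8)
The plan is to exploit the machinery already developed, in particular Proposition~\ref{ridiculous} together with the Hermitian symmetry of $I$ (Lemma~\ref{her}) and the prime decomposition of $\sqrt I$ (Lemma~\ref{primedecomposition}). The starting point is the observation that the non-transversality locus $\mathcal X$ is precisely the zero set of the ideal $I=I(a,\rho)$, and so its irreducible components correspond to the minimal primes of $I$, i.e.\ to the prime ideals $I_j=I(a_j,\rho)$ appearing in $\sqrt I = \cap_{j=1}^k I_j$. Thus I would first reduce the problem to understanding the variety $V(I_j)\subset \mathcal M$ attached to each irreducible factor $a_j(Z,\chi)$ of $a(Z,(\chi,\bar Q(\chi,Z)))$, and set $\mathcal X_j := V(I_j)$.

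Next I would use the dichotomy from Lemma~\ref{keylemma}: for each $j$, either $\delta_l(Z)\in I_j$ for all $l$, or $\bar\delta_l(\xi)\in I_j$ for all $l$. Consider the first case. Here the analysis inside the proof of Proposition~\ref{ridiculous} shows that $a_j(Z,\chi)=b_j(Z)\,u(Z,\chi)$ with $b_j(Z)$ an irreducible divisor of every $\delta_l(Z)$ and $u$ a unit; hence, modulo $\rho$ (i.e.\ after substituting $\tau=\bar Q(\chi,Z)$), the ideal $I_j$ is generated by $b_j(Z)$ and $\rho$. Since $b_j$ is a prime factor of the common content of all the minors $\delta_l$, its zero set $\{b_j(Z)=0\}$ is exactly one of the irreducible codimension-one components $W_i$ of $W_H=\{Z:\delta_1(Z)=\dots=\delta_m(Z)=0\}$. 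The component $\mathcal X_j=V(I_j)$ is then cut out in $\mathcal M$ by $b_j(Z)=0$ together with $\rho(Z,\xi)=\tau-\bar Q(\chi,Z)=0$; fixing $Z\in W_i$, the remaining equation $\rho(Z,\xi)=0$ describes precisely $\xi$ lying in the (conjugate) Segre variety $S^*_Z=\{\xi:\rho(Z,\xi)=0\}$. This yields the first normal form $\mathcal X_j=\{(Z,\xi):Z\in W_i,\ \xi\in S^*_Z\}$. The second case of the dichotomy is treated identically after exchanging the roles of $Z$ and $\xi$ (working with $a_j((z,Q(z,\xi)),\chi)=\bar c_j(\xi)\,v(Z,\xi)$ and substituting $w=Q(z,\chi,\tau)$), giving the second normal form $\mathcal X_j=\{(Z,\xi):\xi\in W_i^*,\ Z\in S_{\bar\xi}\}$.

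Finally, the Hermitian symmetry $\mathcal X^*=\mathcal X$ is immediate from Lemma~\ref{her}: since $a$ and $\rho$ are Hermitian, the ideal $I$ is Hermitian, so $(Z,\xi)\in\mathcal X$ if and only if $(\bar\xi,\bar Z)\in\mathcal X$; equivalently the conjugation $(Z,\xi)\mapsto(\bar\xi,\bar Z)$ preserves $\mathcal X$. One should also note that this involution simply interchanges the two types of components (sending a $Z$-type component over $W_i$ to a $\xi$-type component over $W_i^*$ and vice versa), which is consistent with the stated decomposition.

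The step I expect to be the main obstacle is justifying that the set-theoretic description of $\mathcal X_j=V(I_j)$ in $\mathcal M$ really factors as a product-over-Segre-varieties, i.e.\ that fixing $Z\in W_i$ the fiber of $\mathcal X_j$ over $Z$ is exactly $S^*_Z$ rather than some proper subvariety of it. This relies on $I_j$ being generated (modulo the relation defining $\mathcal M$) by a single function $b_j(Z)$ depending only on $Z$, so that the defining equations decouple into one in $Z$ and one ($\rho$) relating $Z$ and $\xi$. I would therefore take care to record explicitly that, after the substitution $\tau=\bar Q(\chi,Z)$, the prime $I_j$ is generated by $b_j(Z)$ and $\rho(Z,\xi)$, so that no additional constraint on $\xi$ is imposed beyond membership in the Segre variety. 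Granting this, the passage from the algebraic statement (minimal primes of $I$) to the geometric decomposition into the two stated types of irreducible components is routine, and the Hermitian symmetry is automatic.
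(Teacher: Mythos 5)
Your proposal is correct and follows essentially the same route as the paper: the paper's own proof simply invokes Proposition~\ref{ridiculous} to conclude that on $\mathcal M$ one has $a(Z,\xi)=0$ if and only if $B(Z)\bar C(\xi)=0$, with every irreducible factor of $B$ and $C$ dividing all the minors $\delta_l$, and reads off the decomposition and the Hermitian symmetry (via Lemma~\ref{her}) from that. You unpack the same machinery one level deeper—organizing the components as the zero sets $V(I_j)$ of the primes from Lemma~\ref{primedecomposition}, applying the dichotomy of Lemma~\ref{keylemma}, and noting that modulo $\rho$ each $I_j$ is generated by a single irreducible $b_j(Z)$ or $\bar c_j(\xi)$—which if anything makes the irreducibility of each component $\mathcal X_j$ more transparent, but it is the same argument.
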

\begin{proof}
 Observe that by Proposition~\ref{ridiculous}, $(Z,\xi) \in \mathcal X$ if and only if $B(Z)\bar C(\xi) = 0$, where each irreducible factor of $B$ and $C$ divide every $(n+1)\times (n+1)$-minor $\delta_l$ of $H$. The decomposition of $\mathcal X$ in Corollary \ref{Xlocus} follows readily from this fact. The Hermitian symmetry is immediate from Lemma \ref{her}. The proof is complete.
\end{proof}

\section{Proof of Theorem~\ref{mains}}

In this section, we will assume $M$ and $M'$ are (analytic, smooth or formal) real hypersurfaces in $\C^{n+1}$ and $\C^{N+1}$, respectively, and that $M$ is of finite type at $0$. The aim is to show that $H$ is transversal to $M'$ at $0$ under the assumptions in Theorem \ref{mains}. In view of Theorem \ref{funny} (since condition \eqref{codims} implies $2N-r\leq 2n-2$ for any $s\leq n+1$), we may therefore assume here that the non-empty collection of non-trivial $(n+1)\times (n+1)$-minors of $H_Z(Z)$, denoted as before by $\delta_l(Z)$ for $l=1,\ldots,m$, has a (non-trivial) common divisor. The main result of this section is the following theorem.

\begin{theorem}\label{boring} Let $M\subset \C^{n+1}$ and $M\subset \C^{N+1}$ be formal real hypersurface and $H$ formal map such that $H(M)\subset M'$ and the collection of not identically zero $(n+1)\times (n+1)$-minors of $H_Z(Z)$, denoted as before by $\delta_l(Z)$ for $l=1,\ldots,m$, is non-empty. Let $r$ be the rank of the Levi form of $M'$ at $0$ and assume that, for some $1\leq s\leq n+1$,
\begin{equation}\label{codim3}
 2N-r \le n+s-3.
\end{equation}
Suppose further that $M$ is of finite type at $0$, and that for every common divisor $d(Z)$ of the collection $\delta_l(Z)$, $l=1,\ldots,m$, there is at least one $k\times k$-minor $\delta'(Z)$ of the Jacobian matrix $H_Z(Z)$ such that $k\geq s$ and $\delta'(Z)$ is relatively prime to $d(Z)$. Then $H$ is transversal to $M'$ at $0$.
\end{theorem}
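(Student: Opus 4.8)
The plan is to argue by contradiction, assuming $a(0,0)=0$, and to run a rank count over a suitable field $S$ that refines the one in the proof of Lemma~\ref{keylemma} by exploiting both the finite type hypothesis and the relatively-prime-minor hypothesis. \emph{Reduction to a single component.} By Proposition~\ref{ridiculous} we may write $a=B(Z)\bar C(\xi)\,t+\tilde a\,\rho$ with $t$ a unit, and since $\rho(0,0)=0$ the assumption $a(0,0)=0$ forces $B(0)\bar C(0)=0$; by the Hermitian symmetry of Lemma~\ref{her} we may assume $B(0)=0$. Choose an irreducible factor $b_j(Z)$ of $B$ with $b_j(0)=0$. By the construction in the proof of Proposition~\ref{ridiculous}, $b_j$ is (up to a unit) one of the prime factors appearing in Lemma~\ref{primedecomposition}, the corresponding component being $I_j=I(b_j(Z),\rho)$, and $b_j$ divides every $\delta_l$. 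Since $b_j$ is then a common divisor of $\{\delta_l\}$, the hypothesis provides a $k\times k$ minor $\delta'$ of $H_Z$ with $k\ge s$ that is relatively prime to $b_j$. Exactly as in the proof of Proposition~\ref{ridiculous}, substituting $\tau=\bar Q(\chi,Z)$ shows that $\delta'\in I_j$ would force $b_j\mid\delta'$; hence $\delta'\notin I_j$, so $\hat H_Z^t$ has rank at least $s$ over the fraction field $S$ of $K[[\xi]]/I_j$ (with $K=\C[[Z]]$, as in Lemma~\ref{keylemma}).

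\emph{An improved supply of tangent vector fields.} I would set up \eqref{e:3}--\eqref{e:7} over $S$ as in Lemma~\ref{keylemma}, with one decisive improvement. Because $a_j=b_j(Z)\cdot(\text{unit})$ with $b_j$ independent of $\xi$, every $\xi$-derivative of $a_j$ lies in $(b_j)\subseteq I_j$, so the first tangency equation in \eqref{e:4} is automatically satisfied and only the equation coming from $\rho$ constrains the symbols. Solving this single linear equation over $S$ yields $n$ (rather than $n-1$) formal vector fields $L_1,\dots,L_n$ with linearly independent symbol vectors $\hat V_1,\dots,\hat V_n\in S^{n+1}$. As in \eqref{e:6.1} I set $Y_k=\hat\Phi\,\hat{\bar H}_\xi\,\hat V_k$ for $k=1,\dots,n$ and $Y_{n+1}=\hat\rho'_Z$; all are annihilated by $\hat H_Z^t$, and the normal-coordinate argument (the last row of $\hat\Phi$ vanishes while the last entry of $Y_{n+1}$ is $1$) shows $Y_{n+1}$ is independent of $Y_1,\dots,Y_n$.

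\emph{The crux: full rank of $\hat{\bar H}_\xi$.} The step I expect to be the main obstacle is to show, using finite type, that $\hat{\bar H}_\xi$ has rank $n+1$ over $S$; equivalently, that not all conjugate minors $\bar\delta_{l'}$ lie in $I_j$. Suppose they do. Substituting $\tau=\bar Q(\chi,Z)$ gives $b_j(Z)\mid\bar\delta_{l'}(\chi,\bar Q(\chi,Z))$, and then substituting $w=Q(z,\chi,\tau)$ and invoking the reality identity \eqref{reality} turns this into
\[
\bar\delta_{l'}(\chi,\tau)=b_j\big(z,Q(z,\chi,\tau)\big)\,R_{l'}(z,\chi,\tau)
\]
in $\C[[z,\chi,\tau]]$, whose left-hand side does not involve $z$. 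Fix $l'$ with $\bar\delta_{l'}\not\equiv0$ (one exists, since $\bar H$ has generic rank $n+1$). Then $\bar\delta_{l'}$ vanishes on the projection to the $(\chi,\tau)$-space of the zero set $\{b_j(z,Q(z,\chi,\tau))=0\}$. Finite type (minimality) of $M$ at $0$ makes the Segre map $(z,\chi,\tau)\mapsto(z,Q(z,\chi,\tau))$ dominant, so a generic Segre variety meets the hypersurface $\{b_j=0\}$ and this projection is dense; therefore $\bar\delta_{l'}\equiv0$, a contradiction. I anticipate the delicate point to be precisely this minimality/Segre-sweeping step: making the dominance of the Segre map and the density of the projection rigorous at the level of formal power series (e.g.\ via iterated Segre sets), and verifying that finite type excludes the degenerate possibility that $\{b_j=0\}$ is itself swept out by Segre varieties.

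\emph{Closing the estimate.} With $\hat{\bar H}_\xi$ injective over $S$ and $\hat\Phi$ of rank at least $r$, the $n$ vectors $Y_1,\dots,Y_n$ span a subspace of dimension at least $n+r-N-1$, and adjoining $Y_{n+1}$ raises this to $n+r-N$. Since $\hat H_Z^t$ annihilates all $Y_k$, its kernel has dimension at least $n+r-N$, whence $\rk\hat H_Z^t\le(N+1)-(n+r-N)=2N-r+1-n$. Together with $\rk\hat H_Z^t\ge s$ from the first step this gives $2N-r\ge n+s-1$, contradicting \eqref{codim3}. Therefore $a(0,0)\ne0$, i.e.\ $H$ is transversal to $M'$ at $0$.
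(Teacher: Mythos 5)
Your architecture coincides with the paper's own proof: assume $a(0,0)=0$, localize at a prime component $I_j=I(b_j(Z),\rho)$ where $b_j$ is an irreducible common divisor of all the $\delta_l$ vanishing at $0$ (your reduction via Proposition~\ref{ridiculous} and the Hermitian symmetry of Lemma~\ref{her} is legitimate), use the relatively-prime-minor hypothesis to produce a $k\times k$ minor $\delta'\notin I_j$ with $k\ge s$ (correct: $\delta'\in I_j$ would force $b_j\mid\delta'$), and rerun the rank count of Lemma~\ref{keylemma} over the fraction field $S$ of $K[[\xi]]/I_j$. Moreover, your middle step is a genuine sharpening of the paper's count: since $a_{j,\xi_l}\in(b_j)\subset I_j$, the $a_j$-equation in \eqref{e:4} is vacuous for this particular $I_j$, so you get $n$ rather than $n-1$ tangent fields, and your final inequality $2N-r\ge n+s-1$ improves the paper's $2N-r\ge n+s-2$; it would prove the theorem even under the weaker hypothesis $2N-r\le n+s-2$.

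The genuine gap is exactly the step you single out as the crux: proving that not every $\bar\delta_{l'}(\xi)$ lies in $I_j$, i.e.\ that $\hat{\bar H}_\xi$ has rank $n+1$ over $S$. In the paper this is precisely Lemma~\ref{stupid} (for $M$ of finite type, $I_j$ cannot contain both a nontrivial series in $Z$ alone and a nontrivial series in $\xi$ alone), and your Segre-sweeping sketch does not substitute for it, for two concrete reasons. First, you attribute the role of finite type to the dominance of the map $(z,\chi,\tau)\mapsto(z,Q(z,\chi,\tau))$; but that map is dominant for trivial reasons --- by normality $Q(z,0,\tau)=\tau$, it restricts to the identity on $\{\chi=0\}$ --- so dominance carries no finite-type information at all. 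The entire content of the claim is what you relegate to a ``degenerate possibility,'' namely that $\{b_j=0\}$ is not swept out by Segre varieties: for $M=\{\im\, w=0\}$ (infinite type), $b_j=w$ and $Q=\tau$, your divisibility identity is satisfied with $\bar\delta_{l'}(\chi,\tau)=\tau\not\equiv 0$, so nothing short of finite type can close this step. Second, the theorem is stated for \emph{formal} hypersurfaces and formal maps, where points, projections, genericity and density have no meaning; the argument must be algebraic. The fix is short and in the style of Proposition~\ref{ridiculous} (it is how the paper proves Lemma~\ref{stupid}): if some $\bar\delta_{l'}\in I_j$, then $a_j(Z,\chi)=\bar c(\chi,\bar Q(\chi,Z))\cdot(\text{unit})$ for an irreducible $\bar c(\xi)$ dividing $\bar\delta_{l'}$; comparing with $a_j=b_j(Z)\cdot(\text{unit})$ and setting $Z=0$ gives $\bar c(\chi,0)\equiv 0$, hence $\bar c=\tau\cdot(\text{unit})$; setting $\chi=0$ and using $\bar Q(0,Z)=w$ gives $b_j=w\cdot(\text{unit})$; substituting back yields $w\cdot(\text{unit})=\bar Q(\chi,Z)\cdot(\text{unit})$, whence $\bar Q(\chi,z,0)\equiv 0$, contradicting finite type. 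Until some such algebraic argument replaces the density heuristic, the proof is incomplete.
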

For the proof, we will need the following lemma.
\begin{lemma}\label{stupid} Assume that $M$ is of finite type at $0$ and $H$ is not transversal to $M'$, i.e., $a(0)=0$. Let $I_j$ be the ideal defined in Lemma $\ref{primedecomposition}$. If there exists a non-trivial $\alpha(Z) \in I_j$ then there are no non-trivial $\beta(\xi) \in I_j$.
\end{lemma}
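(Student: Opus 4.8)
The plan is to argue by contradiction. Assume that $M$ is of finite type at $0$, that $a(0)=0$, and that $I_j$ contains both a nonzero $\alpha(Z)\in\mathbb{C}[[Z]]$ and a nonzero $\beta(\xi)\in\mathbb{C}[[\xi]]$; I will derive a contradiction. Throughout I use the implication extracted from the proof of Lemma~\ref{primedecomposition}: with the choice $\rho=\tau-\bar Q(\chi,Z)$, every $f\in I_j$ satisfies $a_j(Z,\chi)\mid f(Z,(\chi,\bar Q(\chi,Z)))$ in $\mathbb{C}[[Z,\chi]]$. Applying this to $\alpha$, which is unaffected by the substitution, gives $a_j\mid\alpha(Z)$; applying it to $\beta$ gives $a_j\mid\beta(\chi,\bar Q(\chi,Z))$.

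First I would use $a_j\mid\alpha(Z)$ to pin down the shape of $a_j$. Since the inclusion $\mathbb{C}[[Z]]\hookrightarrow\mathbb{C}[[Z,\chi]]$ preserves irreducibility (for a prime $p\in\mathbb{C}[[Z]]$ one has $\mathbb{C}[[Z,\chi]]/(p)\cong(\mathbb{C}[[Z]]/(p))[[\chi]]$, which is a domain), every irreducible factor of the nonzero series $\alpha(Z)$ in $\mathbb{C}[[Z,\chi]]$ already lies in $\mathbb{C}[[Z]]$. As $a_j$ is irreducible and divides $\alpha$, it must be associate to one of these factors, so $a_j=u(Z,\chi)\,p(Z)$ with $u$ a unit and $p=p(z,w)\in\mathbb{C}[[Z]]$ irreducible, $p(0)=0$. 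The lemma is thereby reduced to showing that $p(Z)\mid\beta(\chi,\bar Q(\chi,Z))$ in $\mathbb{C}[[z,w,\chi]]$ forces $\beta\equiv0$. Here I recall from the normal form that $\bar Q(\chi,z,w)=w+\bar q(\chi,z,w)$, where $\bar q$ vanishes whenever $\chi=0$ or $z=0$.

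I would split according to whether $p$ is associate to the normal coordinate $w$; the case $p\not\sim w$ needs no finite-type hypothesis. The ideal $(p)$ is homogeneous for the grading by degree in $\chi$ (as $p$ has $\chi$-degree zero), so every $\chi$-homogeneous component of $\beta(\chi,\bar Q)$ lies in $(p)$. Writing $\beta=\sum_\mu b_\mu(\tau)\chi^\mu$ and letting $a_0$ be its lowest degree in $\chi$, the component of degree $a_0$ equals $\sum_{|\mu|=a_0}b_\mu(w)\chi^\mu$ (using $\bar Q|_{\chi=0}=w$), whence $p\mid b_\mu(w)$ for every such $\mu$. Choosing $\mu_0$ with $b_{\mu_0}\neq0$ gives $p\mid b_{\mu_0}(w)$ with $0\neq b_{\mu_0}(w)\in\mathbb{C}[[w]]$; since every nonzero element of $\mathbb{C}[[w]]$ is $w^m$ times a unit, this forces $p\sim w$, a contradiction.

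The remaining case $p\sim w$ is where finite type is indispensable, and I expect it to be the delicate step, since the lowest-$\chi$-degree argument degenerates once the $\chi$-independent part of $\bar Q$ is killed modulo $p$. Now $(p)=(w)$, so the divisibility is equivalent to $\beta(\chi,\bar Q(\chi,z,0))=0$ in $\mathbb{C}[[z,\chi]]$. The finite-type hypothesis guarantees $R(z,\chi):=\bar Q(\chi,z,0)=\bar q(\chi,z,0)\not\equiv0$, and $\bar q|_{z=0}=0$ makes $R$ of positive order $d$ in $z$. I would then grade by the order in $z$: expand $\beta(\chi,\tau)=\sum_k\beta^{(k)}(\chi)\tau^k$ and $R=R_d+R_{d+1}+\cdots$ with $R_d\neq0$, and let $k_0$ be minimal with $\beta^{(k_0)}\neq0$. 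The $z$-homogeneous component of degree $dk_0$ of $\beta(\chi,R)$ is then exactly $\beta^{(k_0)}(\chi)\,R_d^{k_0}$, which is nonzero because $\mathbb{C}[[z,\chi]]$ is an integral domain and both factors are nonzero. This contradicts $\beta(\chi,R)=0$ and forces $\beta\equiv0$, completing the argument. The conceptual point is that $w$ is the only possible obstruction and that finite type is precisely what removes it.
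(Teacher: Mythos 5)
Your proof is correct, and it opens exactly as the paper's does: from $\alpha(Z)\in I_j$ and the substitution $\tau=\bar Q(\chi,Z)$ you identify $a_j(Z,\chi)$, up to a unit, with an irreducible $p(Z)\in\C[[Z]]$ (the paper's $b(Z)$), justifying along the way that irreducibles of $\C[[Z]]$ stay prime in $\C[[Z,\chi]]$ --- a point the paper leaves implicit in its appeal to the argument of Proposition~\ref{ridiculous}. After that, the two routes genuinely diverge. The paper treats $\beta$ symmetrically: it extracts an irreducible divisor $c(\xi)$ of $\beta$ and shows $a_j(Z,\chi)=c(\chi,\bar Q(\chi,Z))\,v$ with $v$ a unit (this requires the auxiliary fact that $a_j((z,Q(z,\xi)),\chi)$ is irreducible); comparing with $a_j\sim b(Z)$ and substituting $Z=0$, then $\chi=0$, it pins down $c\sim\tau$ and $b\sim w$, whence $w\,(\text{unit})=\bar Q(\chi,Z)\,(\text{unit})$, and setting $w=0$ gives $\bar Q(\chi,z,0)\equiv 0$, contradicting finite type. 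You never factor $\beta$: you keep only the divisibility $p(Z)\mid\beta(\chi,\bar Q(\chi,Z))$ and run two graded-decomposition arguments --- first in $\chi$-degree, using $\bar Q|_{\chi=0}=w$, to force $p\sim w$; then in $z$-order, using finite type through $R=\bar Q(\chi,z,0)\not\equiv 0$, to force $\beta\equiv 0$, contradicting its non-triviality. What your route buys is economy of machinery (no factorization of $\beta$, no irreducibility-under-substitution lemma) and a sharper localization of hypotheses: finite type is needed only to kill the single bad factor $w$, the case $p\not\sim w$ being purely formal. What the paper's route buys is brevity, since it recycles wholesale the mechanism already built for Proposition~\ref{ridiculous}, and once $b\sim w$ and $c\sim\tau$ are identified the contradiction is immediate. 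Both arguments ultimately rest on the same two pillars: unique factorization in $\C[[Z,\chi]]$ after eliminating $\tau$, and the normal-coordinates criterion $\bar Q(\chi,z,0)\not\equiv 0$ for finite type.
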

\begin{proof} We assume, in order to obtain a contradiction, that there are
non-trivial power series $\alpha(Z),\beta(\xi) \in I_j = I(a_j,\rho)$. We can argue as in the proof of Proposition~\ref{ridiculous} to deduce that
\begin{equation}\label{A}
 a_j(Z,\chi) = b(Z) \ u(Z,\chi).
\end{equation}
where $u(Z,\chi)$ is a unit and $b(Z)$ an irreducible divisor of $\alpha(Z)$ in $\C[[Z]]$.
Similarly, we can deduce from the fact that $\beta(\xi) \in I$ that
\begin{equation}\label{B}
 a_j(Z,\chi) = c(\chi,\bar Q(\chi,Z)) \ v(Z,\xi)
\end{equation}
where $v\in \C[[z,\xi]]\subset \C[[Z,\xi]]$ is a unit and $c(\xi)$ is a divisor of $\beta(\xi)$ in $\C[[\xi]]$.
Now, we deduce from \eqref{A} and \eqref{B} that
\begin{align*}
u(Z,\xi) \ b(Z) = a_j(Z,\chi) = v(Z,\xi) \ c(\chi,\bar Q(\chi,Z)).
\end{align*}
Hence, for some unit $s(Z,\xi)$,
\begin{equation}\label{e:10}
  b(Z) = c(\chi,\bar Q(\chi,Z)) \ s(Z,\xi).
\end{equation}
If we substitute $Z=0$ into \eqref{e:10}, then we get
\begin{equation*}
c(\chi,0) = c(\chi,\bar Q(\chi,0)) = s(0,\xi)^{-1} \ b(0) = 0.
\end{equation*}
We deduce that $c(\xi) = \tau  \tilde c(\xi)$, where $\tilde c(\xi)$ is an unit since $c(\xi)$ is irreducible. Then, by setting $\chi = 0$ in \eqref{e:10} and recalling that $\bar Q(0,Z) = w$, we deduce that for some unit $\tilde b(Z,\xi)$:
\begin{align}\label{e:11}
 b(Z) =  c(0,\bar Q(0,Z)) \ s(Z,0,\tau) = w \ \tilde b(Z,\xi).
\end{align}
Consequently:
\begin{equation}\label{e:12}
w \ \tilde b(Z,\xi) = b(Z) = c(\chi,\bar Q(\chi,Z)) \ s(Z,\xi) = \bar Q(\chi,Z)\ \tilde c(\chi,\bar Q(\chi,Z)\ s(Z,\xi).
\end{equation}
This and the fact that $s$ and $\tilde c$ are units imply $\bar Q(\chi, z, 0) = 0$. This contradicts the fact that $M$ is of finite type at $0$. The proof is complete.
\end{proof}
Now we can prove the Theorem~\ref{boring}.
\begin{proof}[Proof of Theorem~$\ref{boring}$] We will argue by contradiction. Assume that $H$ is not transversal to $M'$ at $0$, i.e.\ $a(0,0)=0$. Recall that $I_j$, $j=1,\ldots, k$, denote the ideals defined in Lemma \ref{primedecomposition}.
By Lemma~\ref{keylemma}, for each $j$ either $\delta_l(Z) \in I_j$ for all $l$ or $\bar \delta_l(\xi) \in I_j$ for all $l$. We claim that for some $j$, $\delta_l(Z)\in I_j$ for all $l$. Indeed, even if $\bar \delta_l(\xi)\in I_j$ for all $l$ and all $j$, then (by Lemma \ref{primedecomposition}) $\bar\delta_l(\xi)\in\sqrt{I}$ and, hence, by Lemmas \ref{her} and \ref{primedecomposition} we would also have $\delta_l(Z)\in I_j$ for all $l$ and $j$. (Although this does not matter for the proof, we point out that this latter situation cannot occur by Lemma \ref{stupid}.)

Let us now fix a $j$ be such that $\delta_l(Z)\in I_j$ for all $l$. We claim that there is an $k\times k$-minor $\delta'(Z)$ of $H_Z(Z)$ such that $k\geq s$ and $\delta'(Z) \not\in I_j$. Indeed, if $\delta_i'(Z)$, for $i=1,\ldots,p$, denote the collection of all $k\times k$-minors of $H_Z(Z)$ for $k\geq s$ and $\delta'_i(Z)\in I_j$ for all $i$, then we can argue as in the proof of Proposition~\ref{ridiculous} (considering the collection of $\delta'_i(Z)$ and $\delta_l(Z)$, for $i=1,\ldots, p$ and $l=1\ldots, m$) and conclude that there is a common irreducible divisor $b(Z)$ of $\delta'_i(Z)$ and $\delta_l(Z)$ for all $i=1,\ldots, p$ and $l=1,\dots m$ (which is also a divisor of $a_j(Z,\chi)$, although this does not matter here). This contradicts the fact that to every common divisor of the $\delta_l(Z)$, $l=1,\ldots, m$, there is at least one $\delta'_i(Z)$ which is relatively prime to it. Thus, let $\delta'(Z)$ denote a $k\times k$-minor, with $k\geq s$, such that $\delta'(Z) \not\in I_j$. We will now proceed along the lines of the proof of Lemma~\ref{keylemma}, using the same notation as in that proof. We first observe that, by Lemma \ref{stupid}, no $\bar \delta_l(\xi)\in I_j$ by our choice of $j$. We conclude that the rank of $\hat{\bar H}_\xi(\xi)$ over the quotient field $S$ of $K[[\xi]]/I_j$ is $n+1$. On the other hand, since $\delta'(Z) \not\in I_j$, it also follows that the rank of $\hat H_Z(Z)$ over $S$ is at least $k\geq s$. We then argue in the same way as in the proof of Lemma \ref{keylemma} to deduce from \eqref{e:7} that $2N-r \ge n+s-2$, which contradicts our assumption \eqref{codim3}. The proof is complete.
\end{proof}

\section {Proof of claim in Example \ref{ex0.5}}\label{example}

We first note that
\begin{equation}\label{exeq1}
\begin{aligned}
\left |z_j+[z]\, z_j+\frac{i}{2}w\right |^2-\left |z_j-[z]\, z_j-\frac{i}{2}w\right |^2 &=  2\left(z_j\left(\overline{[z]z_j}-\frac{i}{2}\bar w\right)+\bar z_j\left([z] z_j+\frac{i}{2}w\right)\right)\\
& = 2\left (\left([z]+\overline{[z]}\right)|z_j|^2 +\frac{i}{2}(\bar z_jw -z_j\bar w)\right)
\end{aligned}
\end{equation}
Thus, it follows that the expression
\begin{equation}
2i\left (\im\, G +\sum_{j=1}^n |F_{2j-1}|^2-\sum_{j=1}^n |F_{2j}|^2\right ) = G-\bar G+2i\sum_{j=1}^n\left (|F_{2j-1}|^2-|F_{2j}|^2\right)
\end{equation}
is equal to
\begin{multline}
-2\left([z]w-\overline{[z]w} -2i\left([z]+\overline{[z]}\right)\sum_{j=1}^n |z_j|^2 +\overline{[z]}w-[z]\bar w\right)\\=
-2\left([z]+\overline{[z]}\right)(w-\bar w-2i\sum_{j=1}^n |z_j|^2),
\end{multline}
or, in other words,
\begin{equation}
\im\, G +\sum_{j=1}^n |F_{2j-1}|^2-\sum_{j=1}^n |F_{2j}|^2 = i\left([z]+\overline{[z]}\right)(w-\bar w-2i\sum_{j=1}^n |z_j|^2),
\end{equation}
proving that $H$ sends $M$ into $M'$, and that $H$ is not transversal to $M'$ along the intersection of $M$ with $\re\, [z] =0$ as claimed in Example \ref{ex0.5}. The fact that $H$ is a local embedding at $0$ is trivial.

\end{document}